\documentclass[11pt]{amsart}

\usepackage{amsmath,amssymb,amsthm,amsfonts,mathrsfs}
\usepackage{bm,cite,array}
\usepackage[normalem]{ulem}
\usepackage{xcolor,graphicx,caption,subfig,pifont,xy}
\usepackage[pagewise]{lineno}
\usepackage{mathtools,enumitem,needspace}
\usepackage[hidelinks]{hyperref}

\makeatletter
\@addtoreset{equation}{section}
\makeatother

\theoremstyle{plain}

\newtheorem*{question}{Question}
\newtheorem{lemma}{Lemma}[section]
\newtheorem{theorem}[lemma]{Theorem}

\newtheorem{definition}[lemma]{Definition}
\newtheorem{corollary}[lemma]{Corollary}
\newtheorem{proposition}[lemma]{Proposition}
\newtheorem{remark}[lemma]{Remark}

\newcommand{\R}{\mathbb R}
\newcommand{\Z}{\mathbb Z}
\newcommand{\T}{\mathbb T}
\newcommand{\av}[1]{\langle #1\rangle}
\newcommand{\norm}[1]{\lVert #1\rVert}
\newcommand{\supp}{\operatorname{supp}}
\newcommand{\osc}{\operatorname{osc}}
\newcommand{\dd}{\,d}
\newcommand{\dx}{\,d\bar x}
\newcommand{\eps}{\varepsilon}

\newcommand{\beq}{\begin{equation}}
\newcommand{\eeq}{\end{equation}}
\newcommand{\beqs}{\begin{eqnarray*}}
\newcommand{\eeqs}{\end{eqnarray*}}
\newcommand{\beqn}{\begin{eqnarray}}
\newcommand{\eeqn}{\end{eqnarray}}
\newcommand{\beqa}{\begin{array}}
\newcommand{\eeqa}{\end{array}}

\begin{document}

\title{A counterexample to integral Aleksandrov-Bakelman-Pucci  estimates for a nonlocal operator}
\renewcommand{\shorttitle}{\protect\resizebox{0.9\textwidth}{!}{A counterexample to integral Aleksandrov-Bakelman-Pucci  estimates for a nonlocal operator}}

\subjclass[2020]{%
	{35B50, 35R09, 35B45, 35R11.}%
}

\author{Yongsheng Jiang}
\address{Yongsheng Jiang, School of Statistics and Mathematics,
	Zhongnan University of Economics and Law,
	Wuhan, Hubei 430073, China}
\email{jiangys@zuel.edu.cn}

\author{Lei Xu}
\address{Lei Xu, School of Statistics and Mathematics,
	Zhongnan University of Economics and Law,
	Wuhan, Hubei 430073, China}
\email{leixumath@zuel.edu.cn}

\begin{abstract}
We construct a counterexample to integral
Aleksandrov-Bakelman-Pucci estimates for nonlocal
inequalities of the form
\begin{equation*}
 L_au(x):=\int_{\mathbb R^n}\delta_yu(x)
 \frac{a(x,y)}{|y|^{n+\sigma}}\,dy
 \ge-f(x)\quad\text{in }B_1,
\end{equation*}
with $u\le0$ in $\mathbb R^n\setminus B_1$,
where $\delta_yu(x)=u(x+y)+u(x-y)-2u(x)$,
and the measurable coefficient satisfies
\begin{equation}\tag{C}
 \lambda\le a(x,y)\le\Lambda,\qquad
 a(x,y)=a(x,-y),\qquad
 x\in\mathbb R^n,\quad y\ne0.
\end{equation}
More precisely, for every $n\ge1$, $0<\sigma<2$,
and $0<\lambda<\Lambda$, there exist a fixed coefficient
$a(x,y)$ satisfying \textup{(C)}, a constant $d>0$, and
nonnegative functions $u_j,f_j\in C_c^\infty(B_1)$
such that $L_au_j\ge-f_j$ in $B_1$,
$0\le f_j\le1$, and
$|\operatorname{supp}f_j|\to0$ as $j\to\infty$,
while $u_j\ge d$ on $B_{1/4}$ for every $j$.
Consequently, this fixed operator admits no
$L^p$ maximum estimate for any $1\le p<\infty$.
\end{abstract}

\maketitle

\baselineskip=16.4pt
\parskip=3pt
\noindent\textbf{Keywords:} Nonlocal elliptic equations;  Aleksandrov-Bakelman-Pucci estimate; maximum principle; perturbation.
\section{Introduction}
The Aleksandrov-Bakelman-Pucci (ABP) estimate bounds the
maximum of an elliptic subsolution in terms of an integral
norm of the inhomogeneous term and is a basic tool in
elliptic regularity theory~\cite{CC95}. For uniformly elliptic second-order equations in
nondivergence form, the classical ABP estimate holds
with merely measurable leading coefficients \cite[Chapter~9]{GT01}.
Let $n\ge1$, and denote by $B_r$ the ball of radius $r$
centered at the origin. Suppose that $A(x)=A(x)^T$ is
measurable and satisfies $\lambda I\le A(x)\le\Lambda I$,
where $0<\lambda<\Lambda$. If
$u\in C^2(B_1)\cap C(\overline{B_1})$ satisfies
$\operatorname{tr}(A(x)D^2u)\ge-f$ in $B_1$ and
$u\le0$ on $\partial B_1$, with $0\le f\in L^n(B_1)$,
then
\begin{equation}\label{eq:abpest}
 \sup_{B_1}u^+\le\frac{C_n}{\lambda}\|f\|_{L^n(B_1)},
\end{equation}
where $u^+=\max\{u,0\}$. In particular,
$0\le f\le\mathbf1_E$ implies
$\sup_{B_1}u^+\le(C_n/\lambda)|E|^{1/n}$, where $|E|$ denotes the Lebesgue measure and $\mathbf1_E$
the indicator function of $E$.

We examine whether integral ABP estimates analogous to
\eqref{eq:abpest} hold for nonlocal linear equations with
even uniformly elliptic kernels. Fix $0<\sigma<2$, and consider 
\begin{equation}\label{operator}
 L_au(x):=\int_{\R^n}\delta_yu(x)
 \frac{a(x,y)}{|y|^{n+\sigma}}\dd y
 \ge-f(x)\quad\text{in }B_1,
\end{equation}
with $f\ge0$ and $u\le0$ in $\R^n\setminus B_1$,
where $\delta_yu(x)=u(x+y)+u(x-y)-2u(x)$, and the
measurable coefficient satisfies
\begin{equation}\label{ellipticity}
 \lambda\le a(x,y)\le\Lambda,\qquad
 a(x,y)=a(x,-y),\qquad x\in\R^n,\quad y\ne0.
\end{equation}

Guillen-Schwab~\cite[Question~1.1]{GS12} ask under what
assumptions on the operator $L$ the solutions of $Lv_j=f_j$
in $B_1$, with $v_j=0$ in $\R^n\setminus B_1$ and
$0\le f_j\le1$, satisfy
$\|v_j\|_{L^\infty(B_1)}\to0$ whenever
$|\{f_j>0\}|\to0$.
The Nonlocal PDE Wiki~\cite{WikiABP} formulates a related
question for inequalities with even uniformly elliptic
kernels. In the setting of
\eqref{operator}--\eqref{ellipticity}, this question
takes the following form.\footnote{The webpage displays
a positive indicator on the right-hand side. With its
operator convention and nonpositive exterior values,
the maximum principle would then give $u\le0$.
We therefore use $-\mathbf1_{E_j}$ below.
By evenness, the first-difference operator on the webpage
agrees with $L_a$ for
$K(x,y)=2a(x,y)|y|^{-n-\sigma}$, with the integral
interpreted in the symmetric principal-value sense
when necessary.}

\begin{question}
For a fixed coefficient $a$ satisfying \eqref{ellipticity},
let $E_j\subset B_1$ be open and let
$u_j\in C_b^2(\R^n)$ satisfy
\begin{equation}\label{small-support-problem}
 L_au_j\ge-\mathbf1_{E_j}\quad\text{in }B_1,
 \qquad u_j\le0\quad\text{in }\R^n\setminus B_1.
\end{equation}
Does $\lim_{j\to\infty}|E_j|=0$ imply
$\lim_{j\to\infty}\sup_{B_1}u_j^+=0$?
\end{question}

An integral ABP estimate of the form
$\sup_{B_1}u^+\le C\|f\|_{L^p(B_1)}$ for some
$1\le p<\infty$, 
would answer this question affirmatively. Indeed,
taking $f=\mathbf1_{E_j}$ gives
$
 \sup_{B_1}u_j^+
 \le C\|\mathbf1_{E_j}\|_{L^p(B_1)}
 =C|E_j|^{1/p}\rightarrow0
$
whenever $|E_j|\to0$.
The question about \eqref{small-support-problem} therefore asks only for a qualitative
consequence of integral ABP estimates, without specifying
an exponent.

For general uniformly elliptic nonlocal equations,
Caffarelli--Silvestre~\cite{CS09} prove an ABP-type estimate
involving a covering of the contact set and local suprema
of the inhomogeneous term. This estimate yields Harnack
inequalities and regularity results, but does
not by itself imply that $\sup_{B_1}u_j^+\rightarrow0$
whenever $|E_j|\rightarrow0$.

Integral ABP estimates are available under additional
assumptions on the kernel.
Guillen--Schwab~\cite{GS12} obtain such an
estimate which, for $n\ge2$, applies in particular to
coefficients of the form
\begin{equation}\label{quadratic-kernel}
 a(x,y)=\frac{y^TA(x)y}{|y|^2},\qquad
 A(x)=A(x)^T,\qquad
 \lambda I\le A(x)\le\Lambda I.
\end{equation}
For $u\in C_b^2(\R^n)$ satisfying \eqref{operator},
with $0\le f\in C(\overline{B_1})$ and
$u\le0$ in $\R^n\setminus B_1$, Theorem 1.3 in \cite{GS12},
applied to $-u$, gives
$
 \sup_{B_1}u^+
 \le C(n,\sigma,\lambda,\Lambda) \|f\|_{L^\infty(B_1)}^{1-\sigma/2}
       \|f\|_{L^n(B_1)}^{\sigma/2}.$
For the same kernel class \eqref{quadratic-kernel},
Kitano~\cite{Kitano20, Kitano22} obtains an estimate
involving only the $L^n$ norm.

Our main theorem gives a negative answer to the question
under \eqref{ellipticity}. 

\begin{theorem}\label{main}
Let $n\ge1$, $0<\sigma<2$, and $0<\lambda<\Lambda$. There exist a coefficient
$
 a\in C^\infty\bigl(\R^n\times(\R^n\setminus\{0\})\bigr)
$
satisfying \eqref{ellipticity}, a constant $d=d(n,\sigma,\lambda,\Lambda)>0$, open sets $E_j\subset B_1$, and functions $u_j,f_j\in C_c^\infty(B_1)$ such that $u_j\ge0$, $0\le f_j\le1$, $\supp f_j\subset E_j$ with $\lim_{j\to\infty}|E_j|=0$, and
$
 L_au_j\ge-f_j\text{ in } B_1$, while  $u_j\ge d$ in $B_{1/4}$.

\end{theorem}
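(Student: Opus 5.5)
The plan is to build each $u_j$ as a ``field of spikes'': a sum of many tiny, very sharp bumps sitting on a fine lattice, with the source $f_j$ supported only near the spike tops. The coefficient $a$ is then designed, once and for all scales, so that it puts weight close to $\Lambda$ on increments $y$ that land on spikes and close to $\lambda$ on the remaining ones. The mechanism is the following. At a point $x$ in a valley between spikes, the second differences $\delta_yu_j(x)$ are positive for those $y$ with $x\pm y$ on a spike and mildly negative otherwise. A coefficient correlated with the spike geometry amplifies the positive part. This should allow $L_au_j\ge0$ away from the spike tops even when $\Lambda/\lambda$ is close to $1$, provided the spikes at the relevant scale are tall and thin enough. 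The total measure of the spike tops, and hence $|E_j|$, tends to $0$ as the lattice spacing and spike width go to $0$. Meanwhile a smooth background profile, with spike contributions accumulated over many scales, keeps $u_j\ge d$ on $B_{1/4}$.

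\textbf{Step 1 (coefficient).} Fix a sparse sequence of scales $r_k=2^{-N_k}$ and lattices $\Gamma_k=r_k\Z^n$. Define
\[
a(x,y)=\tfrac{\lambda+\Lambda}{2}+\tfrac{\Lambda-\lambda}{2}\,\Theta(x,y),
\]
where $\Theta(x,y)=\sum_k\chi(|y|/r_k)\,\theta_k(x+y)\,\theta_k(x-y)$-type terms, rescaled so that $-1\le\Theta\le1$. Here $\chi$ is a smooth cutoff localizing $|y|$ to the annulus of scale $r_k$. The factor $\theta_k$ is a smooth function equal to $1$ near $\Gamma_k$ (within distance $\rho_k\ll r_k$) and to $-1$ away from it. The product of the $x+y$ and $x-y$ factors makes $a(x,-y)=a(x,y)$ automatic. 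Sparseness of $\{N_k\}$ makes the annuli disjoint, so $a$ is smooth off $y=0$ and satisfies \eqref{ellipticity}.

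\textbf{Step 2 (one-scale subsolution).} For a single scale $k$, set $w_k=\sum_{z\in\Gamma_k\cap B_{1/2}}h_k\,\psi((x-z)/\rho_k)$ with a fixed bump $\psi$. One computes $L_aw_k$ in three regions and aims to show $L_aw_k\ge -C$ everywhere and $L_aw_k\ge c_k>0$ off $\bigcup_z B_{2\rho_k}(z)$.
\begin{itemize}
\item Near a spike top, $L_aw_k$ is of order $-h_k\rho_k^{-\sigma}$. This region is absorbed into $E_j$ after normalizing $h_k$.
\item In the valleys, the annulus $|y|\sim r_k$ sees $x\pm y$ on spikes with weight $\Lambda$ while the negative mass carries weight $\lambda$. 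The gain is then compared to the $O(h_k)$ loss from all other $y$.
\item At large $|y|$, the far field is controlled by the lattice averaging of $w_k$: it is essentially a constant times $h_k(\rho_k/r_k)^n$, so the far-field contribution is small.
\end{itemize}

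\textbf{Step 3 (assembly).} Take $u_j=\eta\cdot\bigl(\sum_{k\le j}\alpha_k w_k\bigr)+\beta\phi$, with $\phi$ a fixed smooth bump and $\eta$ a cutoff. The constants are fixed in this order:
\begin{itemize}
\item $\beta$ small, so that the negative part of $L_a\phi$ is dominated by the positive valley gains $\sum\alpha_kc_k$ outside the tops;
\item $\alpha_k,h_k,\rho_k$, so that the source needed at the tops is at most $1$;
\item $\rho_k^n r_k^{-n}$ summable to $0$ along the tail, so that $|E_j|\to0$.
\end{itemize}
Then $u_j\ge\beta\phi\ge d$ on $B_{1/4}$.

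\textbf{Main obstacle.} The hardest point is Step 2's valley estimate: showing that the positive correlation gain at scale $r_k$ beats both the negative second differences from the smooth background and the cross-scale interactions $L_a w_{k'}$ for $k'\ne k$. This must hold with only the fixed ratio $\Lambda/\lambda$. I would first try to make the gain proportional to $(\Lambda-\lambda)h_k\rho_k^n r_k^{-n-\sigma}$ and choose the scale separation $N_{k+1}\gg N_k$ so that the cross terms are geometrically smaller. If that balance fails, the fallback is to let the spikes at scale $k$ sit on top of the spikes of scale $k-1$ (a self-similar cascade), so that the gains at different scales add rather than compete.
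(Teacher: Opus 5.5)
There is a genuine gap. It sits exactly at the step you flag as the main obstacle, and it cannot be closed by tuning parameters: the spike superposition cannot produce the needed gain for any admissible coefficient.

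At a valley point $x$ of $w_k$ you have $w_k(x)=0\le w_k$, so every $\delta_yw_k(x)\ge0$. Positivity there is automatic, and correlating $a$ with $\Gamma_k$ only rescales the gain by a factor in $[\lambda,\Lambda]$; there is no ``negative mass'' at scale $k$ to suppress. The problem is size:
\begin{itemize}
\item At a top $z$ every increment is nonpositive, and those with $\rho_k<|y|<r_k/2$ equal $-2h_k$. Hence $L_aw_k(z)\le-c\lambda h_k\rho_k^{-\sigma}$, and your normalization that the source at the tops is at most $1$ forces $\alpha_kh_k\lesssim\rho_k^{\sigma}/\lambda$.
\item Off the $2\rho_k$-neighbourhoods of $\Gamma_k$ one has $\alpha_kL_a(\eta w_k)(x)\le2\Lambda\alpha_k\int w_k(x+y)|y|^{-n-\sigma}\,dy$. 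Integrating in $x$ gives at most $C\Lambda\alpha_k\rho_k^{-\sigma}\|w_k\|_{L^1}\lesssim(\Lambda/\lambda)(\rho_k/r_k)^n$.
\end{itemize}
So the total positive contribution of all spike fields, integrated over any fixed ball, is $\lesssim(\Lambda/\lambda)\sum_k(\rho_k/r_k)^n$. This is precisely the quantity you send to zero to get $|E_j|\to0$.

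On the other hand, $\beta$ is bounded below, since you need $\beta\phi\ge d$ on $B_{1/4}$. For $\phi$ with a nondegenerate maximum, $\beta L_a\phi\le-c\lambda\beta$ on a fixed ball $B$, for every admissible $a$. Requiring $L_au_j\ge0$ on $B\setminus E_j$ and integrating gives $c\lambda\beta|B\setminus E_j|\lesssim(\Lambda/\lambda)\sum_k(\rho_k/r_k)^n$, a contradiction for large $j$. Taller, thinner spikes only make the ratio worse. This is the same balance that makes an integral ABP estimate true for the fractional Laplacian, and no pointwise computation that differs from the constant-coefficient one only by factors in $[\lambda,\Lambda]$ can escape it.

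Two smaller slips. With $\theta_k=-1$ off the lattice, $\theta_k(x+y)\theta_k(x-y)=+1$ when both $x\pm y$ are off it, so generic increments get weight $\Lambda$, the reverse of your intent. And since $u_j$ contains every scale $k\le j$, the scale-one tops lie in every $E_j$.

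The missing idea is a global one. For an $\ell$-periodic coefficient with normalized adjoint density $\phi$, every periodic $w$ satisfies $\langle\phi L_aw\rangle=0$. So $L_aw\ge c_0$ off $E$ and $L_aw\ge-1$ on $E$ force $\int_E\phi\ge c_0\int_{\T\setminus E}\phi$: the invariant measure must concentrate on sets of vanishing Lebesgue measure. Your coefficient biases only jumps of length $\sim r_k$ that land within $\rho_k\ll r_k$ of $\Gamma_k$. Exits from those neighbourhoods are driven by jumps of length $\sim\rho_k$, which the scale-$k$ term does not see, so heuristically it barely changes the invariant density. The cascade fallback does not address this.

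The paper produces the concentration directly. At stage $j+1$ it replaces the small-jump part $|y|<r_{j+1}$ of $a_j$ by $a_*/g(N_{j+1}x_1/\ell)$. A homogenization argument shows the new adjoint density is $L^2$-close to $g_N\bar\phi_r$, which gives $J(\phi_{j+1})=\langle\sqrt{\phi_{j+1}}\rangle\le\tau J(\phi_j)$ with $\tau<1$ by Jensen ($\int_0^1\sqrt g<1$).

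Its test function is then not an explicit spike sum but the solution of the periodic Poisson problem $L_{a_j}v_j=c_j-h_j$, with $h_j$ supported in $\{\phi_j>1/2\}$ and $c_j=\langle h_j\phi_j\rangle\ge3/4$. The constant $c_j$ is the order-one gain at every point, and the small period gives $\operatorname{osc}v_j\le d$. Finally, $r_{j+1}$ is chosen after $v_j$, so that all later modifications of the coefficient perturb $L(v_j+2d-4d\theta)$ by at most $1/4$.
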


Since the coefficient $a$ is fixed, the following
consequence holds even when the constant in the estimate
is allowed to depend on $a$.

\begin{corollary}\label{noestimate}
For the operator in Theorem~\ref{main} and every $1\le p<\infty$, there is no constant $C_a$ such that
$
 \sup_{B_1}u^+\le C_a\|f\|_{L^p(B_1)}
$
holds for all $u,f\in C_c^\infty(B_1)$ with $0\le f\le1$ and $L_au\ge-f$ in $B_1$.  
\end{corollary}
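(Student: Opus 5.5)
We derive Corollary~\ref{noestimate} directly from Theorem~\ref{main}, using the fixed coefficient $a$ given there. The idea is to assume the estimate holds for some $p$ and test it on the sequence $(u_j,f_j)$ from the theorem, which keeps its maximum bounded below while the $L^p$ norm of the right-hand side shrinks.

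Fix $1\le p<\infty$ and suppose, for contradiction, that some constant $C_a$ satisfies
\[
\sup_{B_1}u^+\le C_a\|f\|_{L^p(B_1)}
\]
for all admissible pairs. Take the coefficient $a$, the constant $d>0$, the open sets $E_j$ and the pairs $u_j,f_j\in C_c^\infty(B_1)$ from Theorem~\ref{main}. These pairs are admissible in the sense of the corollary: $0\le f_j\le 1$ and $L_au_j\ge -f_j$ in $B_1$.

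\emph{Lower bound.} Since $u_j\ge d$ on $B_{1/4}$, we have $\sup_{B_1}u_j^+\ge d$ for every $j$.

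\emph{Upper bound.} Since $0\le f_j\le \mathbf 1_{E_j}$ (because $\supp f_j\subset E_j$), we get
\[
\|f_j\|_{L^p(B_1)}\le |E_j|^{1/p}.
\]

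Combining the two bounds with the assumed estimate gives $d\le C_a|E_j|^{1/p}$ for every $j$. Because $|E_j|\to0$ and $p<\infty$, the right-hand side tends to $0$, which contradicts $d>0$. Hence no such constant $C_a$ exists.

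There is no real obstacle in this argument. The only things to check are that the class of pairs in the corollary contains the $u_j,f_j$ produced by Theorem~\ref{main}, which holds since both are in $C_c^\infty(B_1)$ and the inequality is exactly the one in the theorem, and that $C_a$ is allowed to depend on $a$ but not on $j$. That second point is exactly why it matters that Theorem~\ref{main} provides a single coefficient $a$ independent of $j$.
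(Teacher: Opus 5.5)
Your proposal is correct and follows the paper's proof: you test the assumed estimate on the pairs $(u_j,f_j)$ from Theorem~\ref{main} and use $0\le f_j\le\mathbf 1_{E_j}$ to get $\|f_j\|_{L^p(B_1)}\le|E_j|^{1/p}$. This forces $0<d\le C_a|E_j|^{1/p}\to0$, a contradiction, and you rightly note that a single fixed coefficient $a$ is what keeps $C_a$ independent of $j$.
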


The coefficient constructed in the proof of
Theorem~\ref{main} is radial in $y$, with a nontrivial
dependence on $|y|$; see Remark~\ref{radialdependence} below.
It therefore lies outside the class \eqref{quadratic-kernel},
whose coefficients are homogeneous of degree zero in $y$. The following corollary addresses the question posed by
Guillen--Schwab~\cite[Question~1.1]{GS12}.
It shows that evenness and uniform ellipticity alone
do not ensure uniform convergence of the Dirichlet
solutions to zero as the supports of the data shrink
in measure.

\begin{corollary}\label{equationcounterexample}
For the coefficient and data in Theorem~\ref{main}, there exist bounded continuous functions $w_j:\R^n\to\R$ satisfying
\begin{equation}\label{dirichletcounterexample}
 L_aw_j=-f_j\quad\text{in }B_1,\qquad
 w_j=0\quad\text{in }\R^n\setminus B_1,
\end{equation}
in the viscosity sense. Moreover,   $w_j\ge d$ on $B_{1/4}$, where $d>0$ is the constant in Theorem ~\ref{main}. Consequently, $\widetilde w_j=-w_j$ solves $L_a\widetilde w_j=f_j$ in $B_1$ in the viscosity sense with zero exterior values, while
$
 \lim_{j\to\infty}|\{f_j>0\}|=0$,
 and $\|\widetilde w_j\|_{L^\infty(B_1)}\ge d$ for every $j$.
\end{corollary}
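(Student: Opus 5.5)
The plan is to build $w_j$ as the viscosity solution of the Dirichlet problem \eqref{dirichletcounterexample} and then compare it with the classical subsolution $u_j$ from Theorem~\ref{main}. The estimate $w_j\ge u_j\ge d$ on $B_{1/4}$ then follows directly.

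\emph{Existence.} The coefficient $a$ of Theorem~\ref{main} is smooth in $x$ and satisfies \eqref{ellipticity}. The data $f_j\in C_c^\infty(B_1)$ are bounded and continuous, and the exterior datum is $0$. I would therefore invoke the standard Perron method for linear (hence Pucci-bounded) uniformly elliptic nonlocal operators with continuous data (Caffarelli--Silvestre, Barles--Chasseigne--Imbert). This needs two ingredients.

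First, boundary barriers. Since $B_1$ satisfies a uniform exterior ball condition, the usual barriers for $\mathcal M^\pm$ of order $\sigma$ give continuity of the Perron solution up to $\partial B_1$, so that $w_j$ is continuous on $\R^n$ and vanishes outside $B_1$.

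Second, a comparison principle for $L_a$ between viscosity sub- and supersolutions. Here $L_a$ is linear with $x$-continuous coefficient, and comparison follows from the Pucci-extremal inequality for the difference of a viscosity sub- and supersolution together with the maximum principle for $\mathcal M^+$.

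The resulting $w_j$ is bounded and continuous, and it solves \eqref{dirichletcounterexample} in the viscosity sense. I expect this existence step to be the main technical point, though it is routine: the only care needed is to check that the cited framework covers kernels $a(x,y)|y|^{-n-\sigma}$ with $a$ merely measurable in $y$ but continuous in $x$. If needed, I would instead approximate by kernels smooth in $y$ and pass to the limit using stability of viscosity solutions and uniform H\"older/barrier estimates.

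\emph{Lower bound.} Here comparison only needs to be carried out between the classical function $u_j$ and the viscosity supersolution $w_j$, which I would do by hand. Suppose $M:=\sup_{B_1}(u_j-w_j)>0$.

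Fix $\eta\in C_c^\infty(B_3)$ with $0\le\eta\le1$ and $\eta=1$ on $B_2$, and set $\psi=1-\eta\ge0$. Then $\psi=0$ on $B_2$ and $\psi\le1$. For $x\in B_1$,
\[
\delta_y\psi(x)=2-\eta(x+y)-\eta(x-y)\ge0 .
\]
This quantity equals $2$ for $|y|\ge4$, so $L_a\psi\ge c_0>0$ in $B_1$.

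Choose $\varepsilon<M$. Outside $B_1$ we have $u_j+\varepsilon\psi-w_j=\varepsilon\psi\le\varepsilon<M$, and on $\partial B_1$ this function is $\le0$ by continuity. Hence its supremum over $\R^n$ is attained at some $x_0\in B_1$. Then $\varphi=u_j+\varepsilon\psi-c$, for the appropriate constant $c$, is a smooth test function touching $w_j$ from below at $x_0$ and lying below $w_j$ globally. By the viscosity supersolution property, $L_a\varphi(x_0)\le -f_j(x_0)$. However,
\[
L_a\varphi(x_0)=L_au_j(x_0)+\varepsilon L_a\psi(x_0)\ge -f_j(x_0)+\varepsilon c_0 ,
\]
which is a contradiction. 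Therefore $u_j\le w_j$ in $B_1$, and so $w_j\ge d$ on $B_{1/4}$.

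\emph{Conclusion.} The statements about $\widetilde w_j=-w_j$ follow by linearity. The relation $|\{f_j>0\}|\le|E_j|\to0$ comes from Theorem~\ref{main}, and $\|\widetilde w_j\|_{L^\infty(B_1)}\ge d$ is immediate from $w_j\ge d$ on $B_{1/4}$.
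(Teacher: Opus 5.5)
Your comparison of $u_j$ with $w_j$ is correct and is essentially the paper's argument. The paper uses $\varphi=u_j-M\xi$ with $\xi=1$ on $B_1$ and $L_a\xi(x_0)<0$; your $u_j+\varepsilon(1-\eta)-M$ is the same device (for $\varepsilon=M$ it is literally $u_j-M\eta$). The conclusions for $\widetilde w_j$ are also fine.

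\textbf{The gap is in the existence of $w_j$.} Your Perron construction needs comparison between viscosity sub- and supersolutions of $L_aw=-f_j$. You derive it from the Pucci inequality for the difference of a sub- and a supersolution. That inequality is proved for translation-invariant operators. Its extensions to $x$-dependent kernels (doubling of variables, sup/inf-convolutions, or a L\'evy--It\^o form with a Lipschitz jump map) need the $x$-dependence of the kernel to be controlled uniformly as $|y|\to0$.

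For the coefficient of Theorem~\ref{main} this is exactly what fails. By \eqref{recursion}, $a(x,y)=a_*/g(N_jx_1/\ell)$ on $r_{j+1}\le|y|\le r_j/2$, and $N_j\to\infty$ is chosen after $r_j$. So $a(\cdot,y)$ is smooth for each $y\ne0$, but $\sup_{y\ne0}|a(x,y)-a(x',y)|$ does not tend to $0$ as $|x-x'|\to0$. In a doubling argument with maximum points $x_\varepsilon,x'_\varepsilon$, the error term
\[
\int_{|y|\ge\delta}|\delta_yv(x'_\varepsilon)|\,|a(x_\varepsilon,y)-a(x'_\varepsilon,y)|\,|y|^{-n-\sigma}\,dy
\]
is not controlled: $|\delta_yv|$ has no $|y|^2$ bound for merely continuous $v$, and nothing ties $|x_\varepsilon-x'_\varepsilon|$ to the frequencies $N_j$. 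Without comparison, Perron's method gives only a possibly discontinuous solution; your barriers give continuity at $\partial B_1$, not in the interior. The paper avoids this by citing Mou's Theorem~5.6. That is a Perron construction for non-translation-invariant uniformly elliptic nonlocal equations, and it obtains continuity from H\"older estimates instead of comparison.

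\textbf{Your fallback can be repaired, but the approximation must target the right defect.} The kernel is already smooth for $y\ne0$; what needs regularizing is the $x$-oscillation near $y=0$. Use the finite-stage coefficients $a_k$ instead:
\begin{enumerate}[label=(\roman*)]
\item Near $y=0$ they equal $a_*c_k(x)$ with $c_k$ smooth. So $L_{a_k}$ has a L\'evy--It\^o representation with Lipschitz jump map, via $c(x)\int\delta_yv\,|y|^{-n-\sigma}\,dy=\int\delta_{c(x)^{1/\sigma}z}v\,|z|^{-n-\sigma}\,dz$, and comparison and Perron's method are standard for it.
\item The Dirichlet solutions $w^{(k)}$ are uniformly bounded and equi-H\"older on $\R^n$, by Caffarelli--Silvestre estimates and Pucci barriers.
\item Since $a_k=a$ for $|y|\ge r_{k+1}$, Lemma~\ref{smalljump} gives $\|L_{a_k}\varphi-L_a\varphi\|_\infty\le C_*r_{k+1}^{2-\sigma}\|D^2\varphi\|_\infty\to0$. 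Hence a uniform limit of the $w^{(k)}$ is a viscosity solution by stability.
\end{enumerate}
This would be a self-contained substitute for the citation of Mou. As written, however, your existence step rests on a comparison principle that the standard theory does not supply for this operator.
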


The construction is based on the adjoint equation 
for a periodic Poisson problem. Section~\ref{sec:poisson} shows that concentration of the normalized positive adjoint solution yields a compactly supported subsolution with a uniform positive lower bound. Section~\ref{sec:concentration} obtains the required concentration by introducing rapidly oscillating coefficients at successively smaller jump lengths. Section~\ref{sec:fixed} proves the main results. The periodic solvability theorem is proved in Appendix~\ref{sec:linear}.

\section{Preliminaries and periodic problems}\label{sec:poisson}

We first collect the notation and solvability results for the  adjoint and Poisson
equations. We then prove an oscillation estimate and a localization result whose constants are independent of coefficient derivatives. Throughout this section,  $L_a$ is defined by \eqref{operator} and $a$ satisfies \eqref{ellipticity}. We denote by $C_b^2(\R^n)$ the space of $C^2$ functions
on $\R^n$ that are bounded together with their first
and second derivatives. Taylor's formula gives
\begin{equation}\label{incrementbound}
 |\delta_y\varphi(x)|\le\min\{\|D^2\varphi\|_\infty|y|^2,\,4\|\varphi\|_\infty\},
 \qquad \varphi\in C_b^2(\R^n),
\end{equation}
where the Hessian is measured in the matrix operator norm. 

Fix $\ell>0$ and let $\T=\R^n/\ell\Z^n$, represented by $Q_\ell=[-\ell/2,\ell/2]^n$. Functions on $\T$ are identified with $\ell$- periodic functions on $\R^n$. We use normalized integration and norms:
\begin{equation*}
 \av h=\ell^{-n}\int_{Q_\ell}h(x)\dd x=\int_\T h\dx,
 \qquad \|h\|_p=\|h\|_{L^p(\T)}.
\end{equation*}
All function spaces below are consising of real functions, and $(u,v)_2=\av{uv}$ denotes the real $L^2$ inner product. Complex exponentials are used for Fourier calculations only, with distributions and operators extended complex-linearly. Let
\begin{equation}\label{fouriernotation}
 e_k(x)=e^{2\pi i k\cdot x/\ell},\qquad
 \widehat v(k)=\av{v\overline{e_k}},\qquad k\in\Z^n,
\end{equation}
and
\begin{equation}\label{sobolevnotation}
 H^s(\T)=\left\{v\in\mathcal D'(\T):
 \|v\|_{H^s}^2:=\sum_{k\in\Z^n}(1+|k|^2)^s|\widehat v(k)|^2<\infty\right\},
 \qquad s\in\R.
\end{equation}
 For a coefficient $a$ that is $\ell$-periodic in $x$, the adjoint $L_a^*$ is defined by
\begin{equation}\label{adjointdefinition}
 \langle L_a^*\phi,\varphi\rangle_\T=\av{\phi L_a\varphi},
 \qquad \phi\in L^1(\T),\quad\varphi\in C^\infty(\T).
\end{equation}
\begin{definition}\label{adjsol}
For a coefficient $a$ that is $\ell$-periodic in $x$, let $L_a^*$ be the distributional adjoint of $L_a$ defined by \eqref{adjointdefinition}. Suppose that $\phi\in  L^2(\T) $ satisfies $\phi>0$, $L_a^*\phi=0$,
and $\av{\phi}=1$,  that is,
 \begin{equation}\label{adjsolution}
 \langle L_a^*\phi,\varphi\rangle_\T=\av{\phi L_a\varphi}=0, \text{ for  all } \varphi\in C^\infty(\T), \av{\phi}=1.
\end{equation}
We call $\phi$
the normalized positive adjoint solution of $L_a$.
\end{definition}

More generally, for an operator $L$ as in Theorem~\ref{periodictheorem}
below, the function $\phi$ given by
Theorem~\ref{periodictheorem}\,\textup{(ii)} below is called the normalized
positive adjoint solution of~$L$.

Fix $a_*\in(\lambda,\Lambda)$. Each finite stage of the construction will belong to the following coefficient class.

\begin{definition}\label{finiteclass}
A coefficient \(a\) is called an auxiliary coefficient if $a(x,y)$ satisfies \eqref{ellipticity}, is smooth for $y\ne0$, $\ell$-periodic in $x$, and radial in $y$, and, for some $0<\rho<R$ and smooth positive periodic $c(x)$, satisfies
\begin{equation*}
 a(x,y)=a_*c(x) \text{ for }|y|<\rho,\qquad
 a(x,y)=a_*\text{ for }|y|>R.
\end{equation*}
The parameters $a_*,\ell$ are fixed; $\rho,R,c(x)$ may vary.
\end{definition}

Fix a radial $\chi\in C_c^\infty(B_1)$ with $0\le\chi\le1$ and $\chi=1$ on $B_{1/2}$. Define $\chi_r(y)=\chi(y/r)$ and
\begin{equation}\label{Ardefinition}
 A_rv(x)=a_*\int_{\R^n}\delta_yv(x)\frac{\chi_r(y)}{|y|^{n+\sigma}}\dd y.
\end{equation}
This operator is self-adjoint and nonpositive in $L^2(\T)$, with domain $H^\sigma(\T)$, and
\begin{equation}\label{ellest}
 \|v\|_{H^{s+\sigma}}\le C_{r,s}
 \bigl(\|A_rv\|_{H^s}+\|v\|_{H^s}\bigr),\qquad s\in\R;
\end{equation}
see Lemma~\ref{fourier} below. For an auxiliary coefficient and any $r>0$, the periodized tail kernel is
\begin{equation}\label{periodkernel}
 q(x,z)=2\ell^n\sum_{k\in\Z^n}
 \frac{(1-\chi_r(z-x+\ell k))a(x,z-x+\ell k)}
 {|z-x+\ell k|^{n+\sigma}}.
\end{equation}
 Lemma~\ref{periodization}  below gives
\begin{equation}\label{qassumption}
 q\in C^\infty(\T\times\T),\qquad 0<q_-\le q(x,z)\le q_+.
\end{equation}
Define
\begin{equation}\label{Qdefinition}
 Qv(x)=\ell^{-n}\int_{Q_\ell}q(x,z)v(z)\dd z,\qquad b(x)=\ell^{-n}\int_{Q_\ell}q(x,z)\dd z.
\end{equation}
Then $Q1=b$, $q_-\le b\le q_+$. Moreover, \eqref{periodkernel} and \eqref{Qdefinition}   show that, for every $v\in C^\infty(\T)$(see Lemma A.2 below), 
\begin{equation}\label{tailrepresentation}
 \int_{\R^n}\delta_yv(x)\frac{(1-\chi_r(y))a(x,y)}{|y|^{n+\sigma}}\dd y
 =Qv(x)-b(x)v(x).
\end{equation}
For $r\le\rho$ in Definition~\ref{finiteclass}, it follows from  Lemma~\ref{periodization}  below  that 
\begin{equation}\label{Lmodel}
 L_a=c(x)A_r+Q-b.
\end{equation}
\Needspace{7\baselineskip}
\begin{theorem}
\label{periodicsolvability}\label{periodictheorem}
Let $r>0$, $c\in C^\infty(\T)$ with $\min_\T c>0$,
and let $q$ satisfy \eqref{qassumption}.
Define $L=cA_r+Q-b$, where $A_r$ is given by
\eqref{Ardefinition} and $Q,b$ by \eqref{Qdefinition}.
Here $L^*$ denotes the real adjoint, defined
as in \eqref{adjointdefinition} with $L_a$ replaced by $L$.
Then the following assertions hold.
\begin{enumerate}[
 label=\textup{(\roman*)},
 leftmargin=*,
 itemsep=0.2em
]
\item\label{periodicfredholm}
$L:H^\sigma(\T)\to L^2(\T)$ is Fredholm of index zero,
with $\ker L=\operatorname{span}\{1\}$.
If $v\in H^\sigma(\T)$ and $Lv\in C^\infty(\T)$,
then $v\in C^\infty(\T)$.

\item\label{periodicadjoint}
There is a unique $\phi\in C^\infty(\T)$ satisfying
$L^*\phi=0$, $\phi>0$, and $\av{\phi}=1$.
The  kernel of $L^*$ in $L^2(\T)$
is $\operatorname{span}\{\phi\}$.

\item\label{periodicpoisson}
For real-valued $f\in C^\infty(\T)$, the equation
$Lv=f$ has a real-valued smooth solution if and only if
$\av{f\phi}=0$.
The solution is unique up to an additive constant;
either $\av{v\phi}=0$ or $\min_\T v=0$
determines a unique solution.
\end{enumerate}

\end{theorem}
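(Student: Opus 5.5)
We view $L=cA_r+Q-b$ as the principal part $cA_r$ plus a compact perturbation, and use the maximum principle to identify the kernels.

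\textbf{Step 1 (Fredholm property and regularity).} By \eqref{ellest}, $A_r-1:H^\sigma(\T)\to L^2(\T)$ is an isomorphism, since $A_r$ is self-adjoint and nonpositive. Write
\[
cA_r-1=c(A_r-1)+(c-1).
\]
Because $\min c>0$, multiplication by $c$ is an isomorphism of $L^2$. The term $(c-1)$ is compact from $H^\sigma$ to $L^2$ by Rellich. Hence $cA_r$ is Fredholm of index zero. The operators $Q$ (smoothing, since $q\in C^\infty$) and $b$ are also compact $H^\sigma\to L^2$, so $L$ is Fredholm of index zero. For regularity, take $v\in H^s$ with $Lv\in C^\infty$. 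Then $cA_rv=Lv-Qv+bv\in H^s$, so $A_rv\in H^s$, and \eqref{ellest} gives $v\in H^{s+\sigma}$. Bootstrapping yields $v\in C^\infty$. The same argument applies to $L^*=A_r(c\,\cdot)+Q^*-b$, with $\psi=c\phi$ as the unknown: solving $A_r\psi=-Q^*\phi+b\phi$ bootstraps in the same way.

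\textbf{Step 2 (kernels via the maximum principle).} Clearly $L1=Q1-b=0$. Now let $v\in\ker L$; by Step 1, $v$ is smooth. At a maximum point $x_0$ we have $A_rv(x_0)\le0$ and
\[
Qv(x_0)-b(x_0)v(x_0)=\ell^{-n}\int q(x_0,z)\bigl(v(z)-v(x_0)\bigr)\dd z\le0.
\]
Since $Lv=0$, both terms vanish. As $q\ge q_->0$, this forces $v\equiv v(x_0)$. So $\ker L=\operatorname{span}\{1\}$. By index zero, $\dim\ker L^*=1$, spanned by some smooth $\phi$. Since $\operatorname{Ran}L=(\ker L^*)^\perp$, assertion (iii) will follow once we know $\av{\phi}\ne0$ and $\phi$ can be taken positive. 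Uniqueness up to constants, and the two normalizations, then follow because $\av{\phi}\ne0$ and because $\min(v+t)=0$ fixes $t$.

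\textbf{Step 3 (positivity of $\phi$; the main obstacle).} First I would show that $\phi$ does not change sign. Write $\phi=\phi^+-\phi^-$. The cleanest route is through the semigroup or resolvent: for $\mu>\|b\|_\infty$, the resolvent $(\mu-L)^{-1}$ is positivity preserving on $C(\T)$, by the maximum principle argument of Step 2. Then $L^*\phi=0$ means $\phi$ is a fixed point of $\mu(\mu-L^*)^{-1}$, whose adjoint is a positive operator fixing $1$, i.e.\ a Markov operator. The standard argument gives
\[
|\phi|=|\mu(\mu-L^*)^{-1}\phi|\le\mu(\mu-L^*)^{-1}|\phi|,
\]
and pairing with $1$ gives equality. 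Hence $|\phi|\in\ker L^*$, so $\phi=\pm|\phi|$, and we may normalize $\phi\ge0$ with $\av{\phi}=1$.

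Strict positivity then comes from the equation at a zero. If $\phi(x_0)=0$ is a minimum, then $\psi=c\phi$ attains its minimum $0$ at $x_0$, so $A_r\psi(x_0)\ge0$. Evaluating the equation at $x_0$,
\[
0=A_r\psi(x_0)+Q^*\phi(x_0)-b(x_0)\phi(x_0)\ge\ell^{-n}\int q(z,x_0)\phi(z)\dd z>0,
\]
since $q\ge q_->0$ and $\phi\not\equiv0$. This is a contradiction, so $\phi>0$, and uniqueness follows from one-dimensionality.

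The delicate point is justifying that the resolvent is positivity preserving with $L$ defined only on $H^\sigma$. I would handle it by first solving $(\mu-L)v=g$ for smooth $g$ (bijective, by Fredholm theory plus the maximum principle for injectivity), getting smoothness from Step 1, and then applying the maximum principle pointwise.
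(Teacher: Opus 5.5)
Your proof is correct. Steps 1 and 2, the bootstrap with $\psi=c\phi$, and the strict-positivity argument at a zero minimum are essentially the paper's; the genuine difference is Step 3, where you show that $\phi$ has one sign and $\av{\phi}\neq0$.

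The paper never builds a resolvent. It uses the range characterization $\operatorname{Ran}L=\{f\in L^2(\T):\av{f\phi}=0\}$ together with the observation that no smooth $h>0$ lies in $\operatorname{Ran}L$: a preimage would be smooth by (i) and would give $h(x_0)=Lv(x_0)\le0$ at a maximum point. Taking $h=1$ gives $\av{\phi}\neq0$. If $\phi<0$ somewhere, the paper takes $\eta\ge0$ supported in $\{\phi<0\}$ and sets $h=1-\eta/\av{\eta\phi}$. Then $h\ge1$ and $\av{h\phi}=0$, which is a contradiction.

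Your Perron--Frobenius route is also sound. You use positivity and $R1=1$ for $R=\mu(\mu-L)^{-1}$, then $|\phi|=|R^*\phi|\le R^*|\phi|$, with equality forced by pairing with $1$. This gives $|\phi|\in\ker L^*$, and one-dimensionality fixes the sign. It is the standard, more robust argument for invariant densities of positive semigroups, and it gives $\av{\phi}\neq0$ for free.

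It costs extra work the paper avoids. You need invertibility of $\mu-L:H^\sigma\to L^2$; injectivity requires rerunning the regularity bootstrap for $\mu-L$ before applying the pointwise maximum principle. You also need positivity of $R$ on $L^2$, not just on $C(\T)$, because the duality step pairs with $\phi\in L^2$. This follows from the smooth-data case by mollification, since $R$ is bounded on $L^2$. Any $\mu>0$ suffices, so the restriction $\mu>\|b\|_\infty$ is unnecessary.

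One small point should be made explicit in Step 2. Identifying the distributional adjoint kernel with $(\operatorname{Ran}L)^\perp$ uses density of $C^\infty(\T)$ in $H^\sigma(\T)$, as the paper notes.
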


The proof is given in Appendix~\ref{sec:linear}.
 Given $h\in C^\infty(\T)$ with $0\le h\le1$, for $ \phi\in L^1(\T)$,  define
\begin{equation}\label{chdefinition}
 c_h=\av{h\phi}.
\end{equation}
Then  $\av{(c_h-h)\phi}=0$, and
Theorem~\ref{periodictheorem}\,\textup{(iii)}
gives a unique smooth solution of
\begin{equation}\label{cellpoisson}
 L_av=c_h-h,\qquad \min_\T v=0.
\end{equation}
 Fix a smooth $\eta:\R\to[0,1]$ with $\eta=0$ on $(-\infty,1]$ and $\eta=1$ on $[2,\infty)$, and define
\begin{equation}\label{extractiondata}
 h=\eta(\phi),\qquad F=\{x\in\T:\phi(x)>1/2\}.
\end{equation}
Define
\begin{equation}\label{Jdefinition}
 J(\phi)=\av{\sqrt{\phi}}.
\end{equation}
The next lemma shows that if $J(\phi)$ is small, then the function $h$ in \eqref{extractiondata} is supported in a set of small measure and $c_h=\av{h\phi}$ is close to $1$.
\begin{lemma}\label{extraction} Let $h$ and $F$ be defined by \eqref{extractiondata}.
If $\phi>0$ is smooth with $\av{\phi}=1$, then $\supp_\T h\subset F$, $|F|/\ell^n\le\sqrt2 J(\phi)$, and $c_h=\av{h\phi}\ge1-\sqrt2 J(\phi)$.
\end{lemma}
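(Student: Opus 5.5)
The plan is to prove the three assertions separately, using only that $0\le\eta\le1$, $\eta=0$ on $(-\infty,1]$, $\eta=1$ on $[2,\infty)$, and elementary Chebyshev-type inequalities for $\sqrt{\phi}$ against the normalized measure $\dx$.

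\emph{Support.} For the support claim, if $h(x)\neq0$ then $\phi(x)>1$. Hence $\{h\ne0\}\subset\{\phi>1\}$, and by continuity of $\phi$ its closure satisfies $\supp_\T h\subset\{\phi\ge1\}\subset\{\phi>1/2\}=F$.

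\emph{Measure of $F$.} On $F$ we have $\sqrt{\phi}>1/\sqrt2$. Integrating this over $F$ gives
\[
|F|/\ell^n\le\sqrt2\int_F\sqrt\phi\dx\le\sqrt2\av{\sqrt\phi}=\sqrt2J(\phi).
\]

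\emph{Lower bound on $c_h$.} Since $\av{\phi}=1$, we can write $1-c_h=\av{(1-h)\phi}$. The integrand vanishes where $\phi\ge2$, because $h=1$ there. On $\{\phi<2\}$ we use $0\le 1-h\le1$ and $\phi=\sqrt\phi\,\sqrt\phi\le\sqrt2\sqrt\phi$. This yields
\[
1-c_h\le\int_{\{\phi<2\}}\phi\dx\le\sqrt2\av{\sqrt\phi}=\sqrt2J(\phi),
\]
which is the claimed bound.

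No step is a real obstacle. The one point needing care is the support inclusion: $\supp h$ is a closure, so we need $\{\phi\ge1\}$ to be closed (true by continuity) and contained in $F$ (true since $1>1/2$).
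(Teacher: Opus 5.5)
Your proof is correct and follows the paper's argument essentially line for line: the same support inclusion $\supp_\T h\subset\{\phi\ge1\}\subset F$, the same Chebyshev-type bound using $1<\sqrt{2\phi}$ on $F$, and the same estimate $1-c_h\le\av{\mathbf 1_{\{\phi<2\}}\phi}\le\sqrt2\av{\sqrt\phi}$. Your remark that $\{\phi\ge1\}$ is closed by continuity spells out a detail the paper leaves implicit.
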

\begin{proof}
The support inclusion follows from $\supp_\T h\subset\{\phi\ge1\}\subset F$. Since $\sqrt{2\phi}>1$ on $F$, we have 
\begin{equation*}
 \frac{|F|}{\ell^n}=\av{\mathbf1_F}\le \frac{1}{\ell^n}\int_{F}\sqrt{2\phi(x)}dx\le\sqrt2\av{\sqrt{\phi}},
\end{equation*}
and
\begin{equation*}
 1-c_h=\av{(1-h)\phi}\le\av{\mathbf1_{\{\phi<2\}}\phi} \le\av{\sqrt{2}\sqrt{\phi}}
 \le\sqrt2 J(\phi).
\end{equation*}
\end{proof}

We next estimate the oscillation of a periodic solution.

\begin{lemma}\label{oscillation}
Suppose that $a$ satisfies \eqref{ellipticity} and is $\ell$-periodic in $x$. If $v\in C^\infty(\T)$ satisfies $L_av=f$, then
\begin{equation*}
 \osc v\le \frac{n^{(n+\sigma)/2}}{2\lambda}\ell^\sigma\osc f.
\end{equation*}
where $\osc v=\max_\T v-\min_\T v$, and $\osc f$ is defined in the same way.
\end{lemma}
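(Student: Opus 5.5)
Evaluate the equation $L_av=f$ at a maximum point and at a minimum point of $v$, and compare the two values. Let $x_1$ be a maximum point of $v$ on $\T$ and $x_0$ a minimum point, and set $M=\max v$, $m=\min v$. At $x_1$ every increment satisfies $\delta_yv(x_1)\le 0$. At $x_0$ every increment satisfies $\delta_yv(x_0)\ge0$. Hence
\begin{equation*}
 f(x_1)=L_av(x_1)\le \lambda\int_{\R^n}\delta_yv(x_1)\frac{dy}{|y|^{n+\sigma}}\le0,
 \qquad
 f(x_0)\ge \lambda\int_{\R^n}\delta_yv(x_0)\frac{dy}{|y|^{n+\sigma}}\ge0.
\end{equation*}
Taking the difference gives
\begin{equation*}
 \osc f\ge f(x_0)-f(x_1)\ge \lambda\int_{\R^n}\bigl(\delta_yv(x_0)-\delta_yv(x_1)\bigr)\frac{dy}{|y|^{n+\sigma}},
\end{equation*}
and the integrand is pointwise nonnegative. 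The integrals converge because $v$ is smooth, by \eqref{incrementbound}.

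The key step is to restrict to a region where the integrand is large. Periodicity is used here: $v(x_1+\ell k)=M$ and $v(x_0+\ell k)=m$ for all $k\in\Z^n$. Choose $y$ with $x_0+y\equiv x_1 \pmod{\ell\Z^n}$. Then $\delta_yv(x_0)\ge v(x_0+y)-v(x_0)=M-m$, since $v(x_0-y)-v(x_0)\ge0$. Symmetrically, taking $y$ with $x_1+y\equiv x_0$ gives $\delta_yv(x_1)\le -(M-m)$.

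A single point has measure zero, so I would use a cleaner pointwise bound instead. For every $y$,
\begin{equation*}
 \delta_yv(x_0)-\delta_yv(x_1)=\bigl[v(x_0+y)-v(x_1+y)\bigr]+\bigl[v(x_0-y)-v(x_1-y)\bigr]+2(M-m).
\end{equation*}
This is not obviously $\ge$ something useful, so I would shift variables instead. Write $z=x_1-x_0$, reduced to a representative in $Q_\ell$, so that $|z|\le\sqrt n\,\ell/2$. Split the integral into the pieces near $y=\pm z$ and handle them by translation. Since both integrals are nonnegative, it suffices to bound $\int \delta_yv(x_0)|y|^{-n-\sigma}dy$ from below. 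The integrand $\delta_yv(x_0)$ is $\ell$-periodic in $y$ and nonnegative, so
\begin{equation*}
 \int_{\R^n}\delta_yv(x_0)|y|^{-n-\sigma}dy\ge \int_{Q_\ell}\delta_yv(x_0)|y|^{-n-\sigma}dy\ge (\sqrt n\,\ell/2)^{-n-\sigma}\int_{Q_\ell}\delta_yv(x_0)\,dy.
\end{equation*}
By periodicity, $\int_{Q_\ell}\delta_yv(x_0)dy=2\ell^n(\av v-m)$. The same argument at the maximum gives $-\int\delta_yv(x_1)|y|^{-n-\sigma}dy\ge (\sqrt n\ell/2)^{-n-\sigma}\,2\ell^n(M-\av v)$.

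Adding the two bounds yields
\begin{equation*}
 \osc f\ge 2\lambda\ell^n(\sqrt n\ell/2)^{-n-\sigma}(M-m)=2^{1+n+\sigma}\lambda n^{-(n+\sigma)/2}\ell^{-\sigma}\osc v.
\end{equation*}
Since $2^{1+n+\sigma}\ge2$, this is at least as strong as the claimed constant $n^{(n+\sigma)/2}\ell^\sigma/(2\lambda)$.

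The main obstacle is making sure the averaging over the cell $Q_\ell$ is legitimate. The singular part near $y=0$ only helps, because the integrand is nonnegative there. I would not use the full tail, since summing over the other cells only strengthens the bound and can be discarded. Finally, I would check that $L_av(x_0)$ is computed with $a\ge\lambda$ on a nonnegative integrand, which justifies replacing $a$ by $\lambda$.
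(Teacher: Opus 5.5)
Your argument is correct and is essentially the paper's proof: evaluate $L_av=f$ at a maximum point and a minimum point, replace $a$ by $\lambda$ using the sign of the integrand, and discard everything outside one period cell. On that cell you bound the kernel from below and use periodicity to produce $\av{v}-\max v$ and $\av{v}-\min v$, then subtract. The only real difference is that you restrict to $y\in Q_\ell$, so $|y|\le\sqrt n\,\ell/2$, whereas the paper restricts to $x_\pm+y\in Q_\ell$ and only has $|y|\le\sqrt n\,\ell$; this improves the constant by a factor $2^{n+\sigma}$. The exploratory paragraphs about choosing $y$ with $x_0+y\equiv x_1$ and splitting near $y=\pm z$ are never used and should simply be deleted.
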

\begin{proof}
Denote by $x_+,x_-\in Q_\ell$ a maximum and a minimum point of $v$, respectively.  At $x_+$, the integrand of $L_a v(x_+)$ is nonpositive. Since $|z-x_+|\le\sqrt n\ell$ for $z\in Q_\ell$, restricting the integral to this cell $Q_\ell$ gives
\begin{equation*}
 \begin{aligned}
 f(x_+)&=2\int_{\R^n}(v(x_++y)-v(x_+))
       \frac{a(x_+,y)}{|y|^{n+\sigma}}\dd y\\
 &\le\frac{2\lambda}{(\sqrt n\ell)^{n+\sigma}}
       \int_{Q_\ell}(v(z)-v(x_+))\dd z
 =\frac{2\lambda}{n^{(n+\sigma)/2}}\ell^{-\sigma}(\av v-v(x_+)).
 \end{aligned}
\end{equation*}
At the minimum point, the same argument gives
\begin{equation*}
 f(x_-)\ge \frac{2\lambda}{n^{(n+\sigma)/2}}\ell^{-\sigma}(\av v-v(x_-)).
\end{equation*}
Subtracting the two inequalities yields
$
 \osc v\le \frac{n^{(n+\sigma)/2}}{2\lambda}\ell^\sigma(f(x_-)-f(x_+))
 \le \frac{n^{(n+\sigma)/2}}{2\lambda}\ell^\sigma\osc f$.
\end{proof}

Fix $\theta\in C^\infty(\R^n)$ with $0\le\theta\le1$ such that 
\begin{equation}\label{eq:cutoff}
 \theta(x)=\begin{cases}
  \displaystyle 0,& |x|<1/4,\\
  1,& |x|>3/4.
 \end{cases}
\end{equation}
 By \eqref{incrementbound}, there is $D\ge1$, depending only on $n,\sigma,\Lambda,\theta$, such that $\|L_a\theta\|_\infty\le D$ for every coefficient $a$ satisfying \eqref{ellipticity}. Fix
\begin{equation}\label{dchoice}
 d=\frac1{16(D+1)},
\end{equation}
and choose the period $\ell$ once and for all so that
\begin{equation}\label{ellchoice}
\frac{n^{(n+\sigma)/2}}{2\lambda}\ell^\sigma\le d.
\end{equation}
Next, we choose $H\in C^\infty(\R)$ convex with $H=0$ on $(-\infty,0]$, $0\le H'\le1$, and
\begin{equation}\label{Hbounds}
 s^+-d\le H(s)\le s^+,\qquad s\in\R. 
\end{equation}
For example, take $H(s)=\int_0^s\gamma(t/d)\dd t$, where $\gamma$ is smooth, nondecreasing, $\gamma(t)=0$  for $t\leq0$, and $\gamma(t)=1$ for $t\geq1$. Fix $\zeta\in C_c^\infty(B_1)$ with $0\le\zeta\le1$ and $\zeta=1$ on $B_{3/4}$.

\begin{proposition}\label{localization}
Let $a$ be an auxiliary coefficient, and let
$h,c_h,v$ satisfy \eqref{chdefinition}-\eqref{cellpoisson},
with $0\le h\le1$.
With $\theta$ fixed in \eqref{eq:cutoff}, the following assertions hold.
\begin{enumerate}[label=\textup{(\roman*)},leftmargin=*,itemsep=0.2em]
\item $0\le v\le d$, and $z=v+2d-4d\theta$ satisfies $z\ge2d$ on $B_{1/4}$ and $z\le-d$ outside $B_{3/4}$.
\item If $c_h\ge3/4$ and $\widetilde a$ satisfies \eqref{ellipticity} and
\begin{equation}\label{errorbudget}
 \|(L_{\widetilde a}-L_a)z\|_\infty\le\tfrac14,
\end{equation}
then $u=H(z)\in C_c^\infty(B_{3/4})$ and $f=\zeta h\in C_c^\infty(B_1)$ satisfy $u\ge0$, $0\le f\le1$, $\supp f\subset\supp\zeta\cap\supp h$, and
\begin{equation*}
 L_{\widetilde a}u\ge-f\quad\text{in }B_1,\qquad
 u\ge d\quad\text{on }B_{1/4}.
\end{equation*}
\end{enumerate}
\end{proposition}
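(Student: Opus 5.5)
For part (i) I will apply Lemma~\ref{oscillation} to the periodic solution $v$ of \eqref{cellpoisson}. Since $0\le h\le1$ and $c_h$ is a constant, $\osc(c_h-h)=\osc h\le1$. Lemma~\ref{oscillation} and the choice \eqref{ellchoice} of $\ell$ then give $\osc v\le d$. Because $\min_\T v=0$, this yields $0\le v\le d$. For $z=v+2d-4d\theta$ I use the definition \eqref{eq:cutoff} of $\theta$. On $B_{1/4}$ we have $\theta=0$, so $z\ge 2d$. Outside $B_{3/4}$ we have $\theta=1$, so $z=v-2d\le d-2d=-d$.

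For part (ii) I first settle the support and regularity statements. Since $H$ vanishes on $(-\infty,0]$ and $z\le-d<0$ outside $B_{3/4}$, the function $u=H(z)$ is smooth and vanishes outside a compact subset of $B_{3/4}$. Indeed, $z<0$ on a neighborhood of $\R^n\setminus B_{3/4}$ by continuity, with $\{z\ge0\}$ closed inside the open ball. From \eqref{Hbounds} I get $u\ge0$, and on $B_{1/4}$ also $u\ge z^+-d\ge d$. The statements $0\le f\le1$ and $\supp f\subset\supp\zeta\cap\supp h$ are immediate.

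The main step is the inequality $L_{\widetilde a}u\ge -f$ in $B_1$. I plan to use convexity of $H$ pointwise. Since $H(z(x\pm y))\ge H(z(x))+H'(z(x))\bigl(z(x\pm y)-z(x)\bigr)$, we get $\delta_yu(x)\ge H'(z(x))\,\delta_yz(x)$. Integrating against the nonnegative kernel gives $L_{\widetilde a}u(x)\ge H'(z(x))L_{\widetilde a}z(x)$. Next I write
\begin{equation*}
 L_{\widetilde a}z=L_av+2d\cdot 0-4dL_a\theta+(L_{\widetilde a}-L_a)z
 =c_h-h-4dL_a\theta+(L_{\widetilde a}-L_a)z,
\end{equation*}
using $L_a$ applied to constants equals zero, and that $L_av=c_h-h$ holds on all of $\R^n$ as a periodic identity. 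The bound $\|L_a\theta\|_\infty\le D$ and \eqref{dchoice} give $4d|L_a\theta|\le 4dD\le1/4$. Together with \eqref{errorbudget} and $c_h\ge 3/4$, this yields $L_{\widetilde a}z\ge 3/4-1/4-1/4-h=1/4-h\ge -h$.

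Finally, $0\le H'\le1$ gives $L_{\widetilde a}u\ge H'(z)(1/4-h)\ge -H'(z)h\ge -h$ on the set where this bound holds, which is all of $\R^n$. To replace $h$ by $f=\zeta h$ I split $B_1$ into two regions. On $B_{3/4}$ we have $\zeta=1$, so $f=h$. On $B_1\setminus B_{3/4}$ we have $z<0$, so $H'(z)=0$ and $L_{\widetilde a}u\ge0\ge-f$. The only delicate points are justifying the convexity inequality inside the integral and noting that $L_{\widetilde a}z$ is well defined for smooth bounded $z$; both follow from \eqref{incrementbound}. I expect no real obstacle here. The bookkeeping of constants, namely $d(D+1)\le1/16$ absorbing $4dD$, is the step to double-check.
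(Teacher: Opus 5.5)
Your proposal is correct and follows the paper's proof step for step. For (i) you use the oscillation lemma together with \eqref{ellchoice}; for (ii) you derive $L_{\widetilde a}z\ge \tfrac14-h$ and combine it with the pointwise convexity inequality $\delta_yH(z)\ge H'(z)\,\delta_yz$. On $B_1\setminus B_{3/4}$ you invoke $H'(z)=0$, whereas the paper uses $u(x)=0\le u$ with positivity of the kernel; either works. One small correction: $u\ge0$ follows from $H$ being nondecreasing with $H=0$ on $(-\infty,0]$, not from \eqref{Hbounds}, whose lower bound $s^+-d$ can be negative.
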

\begin{proof}By 
Lemma~\ref{oscillation}, \eqref{ellchoice} and $\min_\T v=0$, we have 
\begin{equation*}
 0\le v\le\osc v\le \frac{n^{(n+\sigma)/2}}{2\lambda}\ell^\sigma\osc h\le d.
\end{equation*}
The definition of $\theta$ then gives $z\ge2d$ on $B_{1/4}$ and $z\le-d$ outside $B_{3/4}$. This proves \textup{(i)}.

Since $z\le-d$ on $\partial B_{3/4}$ and outside $B_{3/4}$, continuity implies that $H(z)$ vanishes in a neighborhood of  $\partial B_{3/4}$ and outside $B_{3/4}$. Hence $u=H(z)\in C_c^\infty(B_{3/4})$, and \eqref{Hbounds} gives $u\ge z-d\ge d$ on $B_{1/4}$. The support and bounds for $f$ follow directly from $f=\zeta h$.

To prove the subsolution inequality, we combine \eqref{cellpoisson}, \eqref{dchoice}  and \eqref{errorbudget} with $c_h\geq 3/4$ and $\|L_a\theta\|_\infty$, we get
\begin{equation}\label{eq:Lsup}
 L_{\widetilde a}z\ge c_h-h-4dL_a\theta-\tfrac14
 \ge-h+\tfrac12-4dD\ge-h+\tfrac14.
\end{equation}
By the convexity of $H$,
\begin{equation}\label{eq:convH}
 \delta_yH(z)(x)\ge H'(z(x))\delta_yz(x).
\end{equation}
Since $u,z\in C_b^2(\R^n)$, the corresponding integrals
are absolutely convergent by \eqref{incrementbound}.
Integrating against the positive kernel of
$L_{\widetilde a}$, it follows from \eqref{eq:Lsup}, \eqref{eq:convH}  and $H'(z)\geq0$ that
$
 L_{\widetilde a}u\ge H'(z)L_{\widetilde a}z\ge -H'(z)h+\tfrac14H'(z)
 \ge -H'(z)h
 \ge -h$ in $B_1$,
where the last two inequalities follow from
$h\ge0$ and $0\le H'\le1$. Since $\zeta=1$ on $B_{3/4}$, we have $f=\zeta h=h$ in $B_{3/4}$, and hence $L_{\widetilde a}u\ge-f$ in $B_{3/4}$.
For $x\in B_1\setminus B_{3/4}$, we have $u(x)=0$ and
$u\ge0$ in $\R^n$. The positivity of the kernel therefore
gives
$
 L_{\widetilde a}u(x)\ge0\ge-f(x)$ for $x\in  B_1\setminus B_{3/4}.
$
Thus $L_{\widetilde a}u\ge-f$ in $B_1$.
\end{proof}

\section{Concentration of adjoint solutions}\label{sec:concentration}

Choose a positive, nonconstant, smooth $1$-periodic function $g$ such that
\begin{equation}\label{gchoice}
 \int_0^1g(t)\dd t=1,\qquad \lambda\le \frac{a_*}{g(t)}\le\Lambda.
\end{equation}
For example, one may take $g(t)=1+\eps\cos(2\pi t)$ with $\eps>0$ sufficiently small. For each positive integer $N$, define
\begin{equation}\label{gNdefinition}
 g_N(x)=g(Nx_1/\ell), \ x=(x_1,x_2,\cdots, x_n)\in\mathbb{R}^n.
\end{equation}
Since $g$ is nonconstant and has mean one, the strict form of Jensen's inequality gives
\begin{equation}\label{rhochoice}
 \rho_g:=\int_0^1\sqrt{g(t)}\dd t<1.
\end{equation}
Fix $\tau\in(\rho_g,1)$. Both $g$ and $\tau$ remain fixed throughout this section.  

\begin{proposition}\label{onestep}
Let $a$ be an auxiliary coefficient as in Definition~\ref{finiteclass}, let  $g_N$  and $\rho_g$ be given by \eqref{gNdefinition}, and \eqref{rhochoice}, respectively, 
 and let $\phi$ be the normalized positive adjoint solution of $L_a$( see Definition~\ref{adjsol}). Define $\chi_r$ as in \eqref{Ardefinition}. Given any $R_0>0$ and integer $N_0\ge0$, there exist $0<r<R_0$ and an integer $N>N_0$ such that the coefficient
\begin{equation}\label{newcoefficient}
 a_{r,N}(x,y)=\chi_r(y)\frac{a_*}{g_N(x)}+(1-\chi_r(y))a(x,y)
\end{equation}
is an auxiliary coefficient, and the normalized positive adjoint solution $\phi_{r,N}$ of $L_{a_{r,N}}$ satisfies
\begin{equation}\label{onestepcontraction}
 J(\phi_{r,N})\le\tau J(\phi),
\end{equation}
where $\tau\in(\rho_g,1)$.
\end{proposition}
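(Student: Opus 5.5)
The plan is a two-scale (homogenization) argument with the limits taken in the order $N\to\infty$ first, then $r\to0$. The small jumps with coefficient $a_*/g_N$ force the adjoint density to carry a factor $g_N$. Averaging $\sqrt{g_N}$ over its fast oscillation then produces the factor $\rho_g<1$ from \eqref{rhochoice}.

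\emph{Step 0 (admissibility).} For $r<\min\{R_0,\rho\}$, the coefficient $a_{r,N}$ in \eqref{newcoefficient} is smooth for $y\ne0$ and radial in $y$. It is $\ell$-periodic in $x$ because $g_N(x)=g(Nx_1/\ell)$ is. It satisfies \eqref{ellipticity}, being a convex combination of $a$ and $a_*/g_N\in[\lambda,\Lambda]$ by \eqref{gchoice}. Finally, $a_{r,N}=a_*/g_N$ for $|y|<r/2$ and $a_{r,N}=a_*$ for $|y|>R$. So $a_{r,N}$ is auxiliary with $\rho'=r/2$ and $c'=1/g_N$.

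\emph{Step 1 (the intermediate limit, $N\to\infty$ with $r$ fixed).} Since $\chi_r=1$ on $B_{r/2}$, \eqref{Lmodel} gives
\[
L_{a_{r,N}}=g_N^{-1}A_{r/2}+Q_r-b_r,
\]
where $Q_r,b_r$ are built from the common tail $(1-\chi_{r/2})a_{r,N}$. Writing $\phi_{r,N}=g_N\psi_N$ and using that $A_{r/2}$ is self-adjoint, the adjoint equation becomes
\[
A_{r/2}\psi_N+(Q_r-b_r)^*(g_N\psi_N)=0 .
\]
The tail part $(1-\chi_{r/2})a_{r,N}$ depends on $N$ only through $(\chi_{r/2}-\chi_r)a_*/g_N$. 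In the limit the natural candidate is therefore the normalized positive adjoint solution $\psi_r$ of the fixed coefficient $\widetilde a_r=\chi_ra_*+(1-\chi_r)a$; its existence and smoothness come from Theorem~\ref{periodictheorem}.

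To prove $\psi_N\to\psi_r$ strongly in $L^2$, I first get a uniform $L^2$ bound on $\psi_N$. Given that, inverting $A_{r/2}$ on mean-zero functions via \eqref{ellest} gains $\sigma$ derivatives, so $\psi_N-\av{\psi_N}$ is bounded in $H^\sigma$ and precompact in $L^2$. Since $Q_r$ has a smooth kernel and $g_N\rightharpoonup1$ weakly, any limit point solves the limit adjoint equation. Uniqueness in Theorem~\ref{periodictheorem}\,(ii), together with the normalization $\av{g_N\psi_N}=1\to\av{\psi}$, then identifies the limit as $\psi_r$. With $\sqrt{\psi_N}\to\sqrt{\psi_r}$ in $L^1$ and $\sqrt{g_N}\rightharpoonup\rho_g$ weak-$*$, this yields
\[
J(\phi_{r,N})=\av{\sqrt{g_N}\sqrt{\psi_N}}\to\rho_g\av{\sqrt{\psi_r}} .
\]

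\emph{Step 2 (the limit $r\to0$).} The coefficients $\widetilde a_r$ and $a$ differ only for $|y|<r$, by $a_*(c(x)-1)\chi_r$. By \eqref{incrementbound}, $\|(L_a-L_{\widetilde a_r})\varphi\|_\infty\le C\|D^2\varphi\|_\infty r^{2-\sigma}$. For any smooth $F$, solve $L_a\varphi=F-\av{F\phi}$ using Theorem~\ref{periodictheorem}\,(iii). This gives
\[
\av{\psi_rF}-\av{\phi F}=\av{\psi_r(L_a-L_{\widetilde a_r})\varphi}\to0,
\]
so $\psi_r\to\phi$ weakly as measures. Weak convergence alone does not control the concave functional $J$, so I use the variational formula
\[
\sqrt t=\min_{w>0}\tfrac12\Bigl(\frac tw+w\Bigr).
\]
Taking the smooth positive choice $w=\sqrt\phi$ gives $\av{\sqrt{\psi_r}}\le\tfrac12\av{\psi_r/\sqrt\phi+\sqrt\phi}\to J(\phi)$. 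Since $\tau>\rho_g$, I fix $r$ small enough that $\rho_g\av{\sqrt{\psi_r}}<\tau J(\phi)$, and then choose $N>N_0$ large by Step 1. This gives \eqref{onestepcontraction}.

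The main obstacle is the uniform $L^2$ bound on $\psi_N$ in Step 1. The multiplier $g_N$ is not compact, and the Fredholm estimates of Theorem~\ref{periodictheorem} are not obviously uniform in $N$. I would first try a direct energy/maximum-principle argument on the equation for $\psi_N$, using that $A_{r/2}$ is coercive on mean-zero functions while $(Q_r-b_r)^*(g_N\cdot)$ is bounded on $L^2$ uniformly in $N$. If that fails, a weaker route is to avoid strong convergence altogether. Apply the variational bound with $w_N=\sqrt{g_N}\sqrt{\psi_r}$, so that
\[
J(\phi_{r,N})\le\tfrac12\av{\sqrt{g_N}\,\psi_N/\sqrt{\psi_r}+\sqrt{g_N}\sqrt{\psi_r}} .
\]
The remaining input would then be a two-scale weak limit for $\sqrt{g_N}\psi_N$, which needs only measure-level bounds from the positivity and normalization of $\phi_{r,N}$.
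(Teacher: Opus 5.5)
Your architecture is the paper's. The coefficient $\widetilde a_r$ is the paper's $\bar a_r$. Your Step~2 (duality with $L_a\varphi=F-\av{F\phi}$ plus the tangent bound $\sqrt t\le\frac12(t/\sqrt\phi+\sqrt\phi)$) is exactly the paper's estimate $J(\bar\phi_r)\le J(\phi)+\frac{C_*}{2}r^{2-\sigma}\norm{D^2\psi}_\infty$. Your Step~1 is the paper's high-frequency lemma $\norm{\phi_{r,N}-g_N\bar\phi_r}_2\to0$, with $r$ fixed before $N$.

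The gap is the one you flag yourself: you never prove a uniform-in-$N$ bound on $\psi_N=\phi_{r,N}/g_N$. Everything in Step~1 rests on it: compactness, identification of the limit, and convergence of $J(\phi_{r,N})$. Your first route does not close as stated. Coercivity of $A_{r/2}$ on mean-zero functions plus a uniform $L^2$ bound on $(Q_r-b_r)^*(g_N\,\cdot)$ only gives $\norm{\psi_N}_{H^\sigma}\le C\norm{\psi_N}_2$, which is circular because there is no smallness. The fallback through two-scale limits of $\sqrt{g_N}\,\psi_N$ with only $L^1$ control is not a substitute either. With measure-level bounds the limit is a priori only a measure, and identifying it as $g(t)\psi_r(x)$ needs essentially the missing estimate.

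What is missing is an a priori bound that uses positivity, the normalization $\av{\phi_{r,N}}=1$, and the sign of the zeroth-order term. Start by decomposing directly: $L_{a_{r,N}}=g_N^{-1}A_r+Q_r-b_r$, where $Q_r,b_r$ come from the tail $(1-\chi_r)a$ of the old coefficient. This identity is exact and its tail is independent of $N$, so your detour through $A_{r/2}$ and an $N$-dependent tail is unnecessary.

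The adjoint equation then reads $(-A_r+b_rg_N)\psi_N=Q_r^*\phi_{r,N}$. Positivity and normalization give $q_-\le Q_r^*\phi_{r,N}\le q_+$ uniformly in $N$. At a maximum point $x_N$ of $\psi_N$ one has $-A_r\psi_N(x_N)\ge0$, so $q_-\min g\,\norm{\psi_N}_\infty\le b_r(x_N)g_N(x_N)\psi_N(x_N)\le q_+$. This is the paper's argument. Two other ways to get the bound:
\begin{itemize}
\item Testing the equation with $\psi_N$ gives $q_-\min g\,\norm{\psi_N}_2^2\le q_+\norm{\psi_N}_1$.
\item Your energy route also closes once you add $\av{\psi_N}\le 1/\min g$ and absorb $\norm{\psi_N}_2$ by interpolating between $H^\sigma$ and $H^{-s}$ with $s>n/2$.
\end{itemize}
Once you have the bound, $\norm{A_r\psi_N}_\infty\le C$, and the elliptic estimate for $A_r$ gives a uniform $H^\sigma$ bound. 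The rest of your Step~1 then goes through as you describe.
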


We first establish three lemmas used in the proof.

\begin{lemma}\label{smalljump}
Let $a,\widetilde a$ satisfy \eqref{ellipticity}, and suppose that $a=\widetilde a$ whenever $|y|\ge r$. For every $\varphi\in C_b^2(\R^n)$,
\begin{equation}\label{smalljumpestimate}
 \norm{(L_a-L_{\widetilde a})\varphi}_\infty
 \le \frac{2\Lambda|\mathbb S^{n-1}|}{2-\sigma}
 r^{2-\sigma}\norm{D^2\varphi}_\infty.
\end{equation}
For later use, denote
$C_*:=2\Lambda|\mathbb S^{n-1}|/(2-\sigma)$.
\end{lemma}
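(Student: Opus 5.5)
The plan is a pointwise estimate of the integrand, followed by integration in polar coordinates. Since $a=\widetilde a$ for $|y|\ge r$, the difference of the two operators involves only small jumps:
\begin{equation*}
 (L_a-L_{\widetilde a})\varphi(x)=\int_{B_r}\delta_y\varphi(x)\,\frac{a(x,y)-\widetilde a(x,y)}{|y|^{n+\sigma}}\dd y .
\end{equation*}
Both integrals defining $L_a\varphi(x)$ and $L_{\widetilde a}\varphi(x)$ converge absolutely by \eqref{incrementbound}, because $|\delta_y\varphi|\le\|D^2\varphi\|_\infty|y|^2$ near the origin and $|\delta_y\varphi|\le4\|\varphi\|_\infty$ at infinity. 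Hence subtracting them is legitimate, and the integrands cancel identically on $\{|y|\ge r\}$.

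Next I bound the coefficient difference by $|a-\widetilde a|\le\Lambda$. This holds because both coefficients lie in $[\lambda,\Lambda]$, so their difference lies in $[\lambda-\Lambda,\Lambda-\lambda]$ and $\Lambda-\lambda\le\Lambda$. I use the Taylor part of \eqref{incrementbound}, $|\delta_y\varphi(x)|\le\|D^2\varphi\|_\infty|y|^2$, to get
\begin{equation*}
 |(L_a-L_{\widetilde a})\varphi(x)|\le\Lambda\|D^2\varphi\|_\infty\int_{B_r}|y|^{2-n-\sigma}\dd y
 =\Lambda\|D^2\varphi\|_\infty|\mathbb S^{n-1}|\int_0^r t^{1-\sigma}\dd t .
\end{equation*}
The radial integral equals $r^{2-\sigma}/(2-\sigma)$, which is finite since $\sigma<2$. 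This gives the bound with constant $\Lambda|\mathbb S^{n-1}|/(2-\sigma)$, which is at most the stated $C_*=2\Lambda|\mathbb S^{n-1}|/(2-\sigma)$. The factor $2$ leaves slack, for instance if one instead bounds $|a-\widetilde a|\le|a|+|\widetilde a|\le2\Lambda$. For $n=1$ I interpret $|\mathbb S^0|=2$, and the computation is the same.

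Taking the supremum over $x$ gives \eqref{smalljumpestimate}. There is no real obstacle. The only point needing care is justifying the splitting and subtraction of the two integrals, and absolute convergence via \eqref{incrementbound} handles this for $\varphi\in C_b^2(\R^n)$.
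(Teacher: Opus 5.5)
Your proposal is correct and follows the paper's proof: restrict to $|y|<r$, apply the Taylor bound from \eqref{incrementbound}, and integrate $|y|^{2-n-\sigma}$ in polar coordinates. The only difference is that you bound $|a-\widetilde a|\le\Lambda$ where the paper uses the cruder $2\Lambda$, so your constant is half of $C_*$ and implies the stated estimate.
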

\begin{proof}
Since $a=\widetilde a$ for $|y|\ge r$, \eqref{incrementbound} gives
\begin{equation*}
 \|(L_a-L_{\widetilde a})\varphi\|_\infty
 \le2\Lambda\|D^2\varphi\|_\infty
       \int_{|y|<r}|y|^{2-n-\sigma}\dd y
 =\frac{2\Lambda|\mathbb S^{n-1}|}{2-\sigma}r^{2-\sigma}\|D^2\varphi\|_\infty.
\end{equation*}
\end{proof}

\begin{lemma}\label{averaging}
Let $G$ be a continuous $1$-periodic function in $\mathbb{R}$. For every $\Phi\in L^1(\T)$, as $N\rightarrow\infty$ through the positive integers,
\begin{equation}\label{averagelimit}
 \lim_{N\to\infty}\av{\Phi(x)G(Nx_1/\ell)}
 =\av{\Phi}\int_0^1G(t)\dd t.
\end{equation}
where $x_1$ denotes the first component of $x$.
\end{lemma}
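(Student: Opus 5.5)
This is the classical Riemann--Lebesgue-type averaging lemma, and I will prove it by density.

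\emph{Step 1: trigonometric test functions.} Take $\Phi=e_k$ and $G(t)=e^{2\pi i mt}$ with $m\in\Z$. Then
\[
\Phi(x)G(Nx_1/\ell)=e^{2\pi i (k+Nme_1)\cdot x/\ell}=e_{k+Nm e_1}(x),
\]
where $e_1$ is the first coordinate vector. Its average is $1$ if $k+Nme_1=0$ and $0$ otherwise. If $m=0$, the average is $\av{e_k}$, which equals $\av{\Phi}\int_0^1G$. If $m\ne0$, then $k+Nme_1\ne0$ as soon as $N|m|>|k_1|$. So for all large $N$ the average is $0=\av{\Phi}\int_0^1G$. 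By linearity, \eqref{averagelimit} holds whenever $\Phi$ is a trigonometric polynomial on $\T$ and $G$ is a trigonometric polynomial in $t$. Both sides are linear in $\Phi$ and $G$, and real-valued data is covered by taking real parts.

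\emph{Step 2: approximate $G$.} By Fejér's theorem, or Weierstrass, a continuous $1$-periodic $G$ is the uniform limit of trigonometric polynomials $G_\epsilon$ with $\|G-G_\epsilon\|_\infty<\epsilon$. Then, uniformly in $N$,
\[
|\av{\Phi\,(G-G_\epsilon)(Nx_1/\ell)}|\le\epsilon\|\Phi\|_1,
\qquad
\Bigl|\av\Phi\int_0^1(G-G_\epsilon)\Bigr|\le\epsilon\|\Phi\|_1 .
\]

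\emph{Step 3: approximate $\Phi$.} Trigonometric polynomials are dense in $L^1(\T)$: $C(\T)$ is dense in $L^1$, and Fejér means converge uniformly on $C(\T)$. So pick a trigonometric polynomial $P$ with $\|\Phi-P\|_1<\epsilon$. Then, uniformly in $N$,
\[
|\av{(\Phi-P)G_\epsilon(Nx_1/\ell)}|\le\epsilon(\|G\|_\infty+\epsilon),
\]
and the same bound holds for the limit term.

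\emph{Step 4: combine.} Apply Step 1 to $P$ and $G_\epsilon$. An $\epsilon/3$-type argument then gives
\[
\limsup_{N\to\infty}\Bigl|\av{\Phi G(Nx_1/\ell)}-\av\Phi\int_0^1G\Bigr|\le C\epsilon
\]
with $C$ depending only on $\|\Phi\|_1$ and $\|G\|_\infty$. Since $\epsilon$ is arbitrary, the limit \eqref{averagelimit} follows.

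None of the steps is a real obstacle. The only point requiring care is that the error bounds in Steps 2 and 3 are uniform in $N$. This holds because $x\mapsto G(Nx_1/\ell)$ is $\ell$-periodic, since $N$ is an integer, and is bounded by $\|G\|_\infty$ independently of $N$.
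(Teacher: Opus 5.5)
Your proof is correct, but it is organized differently from the paper's. You approximate both factors by trigonometric polynomials, so the core computation is just the orthogonality of the characters $e_{k+Nme_1}$. This costs you two density steps: uniform approximation of $G$ (Fej\'er/Weierstrass) and $L^1$ approximation of $\Phi$.

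The paper approximates only $\Phi$. For a single character $e_k$ and an arbitrary periodic $G$, it splits $[0,\ell]$ into $N$ cells of length $\ell/N$. It then uses $G(j+t)=G(t)$ to factor out $\frac1N\sum_{j=0}^{N-1}e^{2\pi ik_1j/N}$, and notes that this geometric sum vanishes for $0<|k_1|<N$. This gives the exact identity $\av{P(x)G(Nx_1/\ell)}=\av{P}\int_0^1G$ for every trigonometric polynomial $P$, as soon as $N$ exceeds the largest $|k_1|$ in its spectrum. A single $L^1$ approximation of $\Phi$, with error at most $2\|G\|_\infty\|\Phi-P\|_1$, then finishes the proof.

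The paper's route uses only the boundedness of $G$, not its continuity, so it covers any bounded measurable $1$-periodic $G$. It also has one fewer $\eps$-layer. Your route needs uniform approximability of $G$, hence continuity, but in exchange the key computation becomes completely trivial. For the paper's applications ($G=g$ and $G=\sqrt g$, both smooth) the two arguments are equally adequate.
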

\begin{proof}
Let $P(x)$ be a trigonometric polynomial,
$P(x)=\sum_{k\in\mathcal F}\widehat P(k)e_k(x)$,
where $\mathcal F\subset\Z^n$ is finite and $e_k$
is defined in \eqref{fouriernotation}.
Choose $N>\max_{k\in\mathcal F}|k_1|$.
 For each $k\in\mathcal F$, partition $[0,\ell]$ into
$N$ equal intervals and set $x_1=\ell(j+t)/N$
on the $j$th interval. Since $G(j+t)=G(t)$, we obtain
\begin{equation*}
 \frac1\ell\int_0^\ell
 e^{2\pi i k_1x_1/\ell}G(Nx_1/\ell)\dd x_1=
 \frac1N\sum_{j=0}^{N-1}
 e^{2\pi i k_1j/N}
 \int_0^1e^{2\pi i k_1t/N}G(t)\dd t.
\end{equation*}
  If $0<|k_1|<N$, then $e^{2\pi i k_1/N}\ne1$,
whereas $e^{2\pi i k_1}=1$ since $k_1\in\Z$.
Summing the geometric series, we obtain
\begin{equation*}
 \frac1N\sum_{j=0}^{N-1}e^{2\pi i k_1j/N}
 =
 \begin{cases}
  1,& k_1=0,\\
  0,& k_1\ne0.
 \end{cases}
\end{equation*}
For each $m=2,\ldots,n$, 
\begin{equation*}
 \frac1\ell\int_0^\ell
 e^{2\pi i k_mx_m/\ell}\dd x_m
 =
 \begin{cases}
  1,& k_m=0,\\
  0,& k_m\ne0.
 \end{cases}
\end{equation*}
Since $G(Nx_1/\ell)$ depends only on $x_1$,  the preceding calculation implies that,
for $N>|k_1|$,
\begin{equation*}
 \av{e_k(x)G(Nx_1/\ell)}
 =
 \begin{cases}
  \displaystyle\int_0^1G(t)\dd t,& k=0,\\
  0,& k\ne0.
 \end{cases}
\end{equation*}
It follows from $P(x)=\sum_{k\in\mathcal F}\widehat P(k)e_k(x)$ that 
\begin{equation*}
 \av{P(x)G(Nx_1/\ell)}
 =\widehat P(0)\int_0^1G(t)\dd t
 =\av{P}\int_0^1G(t)\dd t.
\end{equation*}
Now let $\Phi\in L^1(\T)$ and $\varepsilon>0$.
Choose a triagonometric polynomial $P$ such that $\|\Phi-P\|_1<\varepsilon$; this is possible because trigonometric polynomials are dense in $L^1(\T)$.
For all  $N>N_P:=\max_{k\in\mathcal F}|k_1|$,  the identity above yields
\begin{equation*}
 \begin{aligned}
 \left|
 \av{\Phi(x)G(Nx_1/\ell)}
 -\av{\Phi}\int_0^1G(t)\dd t
 \right|&=
 \left|
 \av{(\Phi-P)(x)G(Nx_1/\ell)}
 -\av{\Phi-P}\int_0^1G(t)\dd t
 \right|\\
 &\qquad\le
 2\|G\|_{L^\infty(0,1)}\|\Phi-P\|_1\le
 2\varepsilon\|G\|_{L^\infty(0,1)}.
 \end{aligned}
\end{equation*}
Taking the limit superior as $N\to\infty$ and then letting
$\varepsilon\to0^+$ proves \eqref{averagelimit}.
\end{proof}

\begin{lemma}\label{highfreq}
Fix $r>0$, and let $q$ satisfy \eqref{qassumption}. Let $A_r$ be defined by \eqref{Ardefinition}, and define $Q,b$ by \eqref{Qdefinition} for this kernel $q$. Let $g$ be a positive smooth $1$-periodic function with $\int_0^1g(t)\dd t=1$, and define $g_N$ by \eqref{gNdefinition}. Set
\begin{equation}\label{highfreqoperators}
 L_N=g_N^{-1}A_r+Q-b,\qquad L_0=A_r+Q-b.
\end{equation}
Let $\phi_N$ and $\phi_0$ be the normalized positive adjoint solutions of $L_N$ and $L_0$, respectively, given by Theorem~\ref{periodictheorem}. Then
\begin{equation}\label{strongapprox}
 \lim_{N\to\infty}\norm{\phi_N-g_N\phi_0}_2=0.
\end{equation}
\end{lemma}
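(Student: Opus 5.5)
The plan is to factor out the oscillation. Set $\psi_N=\phi_N/g_N$, which is smooth and positive by Theorem~\ref{periodictheorem}\,(ii). I will show that $\psi_N\to\phi_0$ in $L^2(\T)$. This suffices, since
\[
\|\phi_N-g_N\phi_0\|_2\le\|g\|_\infty\|\psi_N-\phi_0\|_2 .
\]

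\textbf{Equation for $\psi_N$.} Since $A_r$ is self-adjoint and multiplication by $g_N^{-1}$ is symmetric, the real adjoint is
\[
L_N^*\phi=A_r(g_N^{-1}\phi)+Q^*\phi-b\phi ,
\]
where $Q^*$ has kernel $q(z,x)$. Hence $L_N^*\phi_N=0$ becomes
\[
(A_r-bg_N)\psi_N=-Q^*(g_N\psi_N)=:F_N .
\]
The normalization $\av{g_N\psi_N}=\av{\phi_N}=1$ together with positivity gives $\|g_N\psi_N\|_1=1$. Since $q\le q_+$, this yields $0\le F_N\le q_+$ uniformly in $N$.

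\textbf{Uniform bounds.} At a maximum point $x_0$ of $\psi_N$ we have $A_r\psi_N(x_0)\le0$. The equation then gives $b g_N\psi_N(x_0)\le -F_N(x_0)\le 0$, which is too strong; more carefully, it gives $b g_N\psi_N(x_0)=A_r\psi_N(x_0)-F_N(x_0)$, so only $|F_N|$ controls $\psi_N$ from one side. So instead I use the energy identity. Pairing the equation with $\psi_N$ gives
\[
\av{(-A_r\psi_N)\psi_N}+\av{bg_N\psi_N^2}=\av{-F_N\psi_N}\le q_+\|\psi_N\|_1\le q_+/\min g .
\]
Both terms on the left are nonnegative, and $bg_N\ge q_-\min g>0$. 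Therefore the quadratic form of $-A_r$ and $\|\psi_N\|_2^2$ are both uniformly bounded.

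By the Fourier description of $A_r$ (Lemma~\ref{fourier}), the symbol of $-A_r$ is comparable to $|k|^\sigma$ for large $|k|$. So the form bound controls the $H^{\sigma/2}$ norm, and $\psi_N$ is bounded in $H^{\sigma/2}(\T)$. Rellich compactness on the torus then gives a subsequence with $\psi_N\to\psi$ in $L^2$. The limit satisfies $\psi\ge0$.

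\textbf{Passing to the limit.} Test the weak form against $\varphi\in C^\infty(\T)$:
\[
\av{\psi_N A_r\varphi}-\av{g_N\psi_N\, b\varphi}+\av{g_N\psi_N\, Q\varphi}=0 .
\]
Write $g_N\psi_N=g_N(\psi_N-\psi)+g_N\psi$. The first piece tends to $0$ in $L^2$. The second converges weakly to $\psi$ by Lemma~\ref{averaging}, applied with $G=g$ and $\Phi=\psi\, b\varphi$, respectively $\Phi=\psi\, Q\varphi$, using $\int_0^1 g=1$. The same lemma with $\Phi=\psi$ gives
\[
\av{\psi}=\lim_N\av{g_N\psi_N}=1 .
\]
Hence $\av{\psi\,L_0\varphi}=0$ for all $\varphi$, i.e.\ $L_0^*\psi=0$ in $L^2(\T)$.

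\textbf{Identifying the limit.} By Theorem~\ref{periodictheorem}\,(ii), $\ker L_0^*=\operatorname{span}\{\phi_0\}$, and the normalization forces $\psi=\phi_0$. Since the limit is independent of the subsequence, the whole sequence converges, which proves \eqref{strongapprox}.

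The main obstacle I expect is the uniform compactness step, specifically making the energy identity rigorous. This is handled by the smoothness of $\psi_N$ and the quadratic-form characterization of the nonpositive self-adjoint operator $A_r$ from \eqref{ellest}/Lemma~\ref{fourier}. No derivative of $g_N$ may enter, since those blow up as $N\to\infty$; the estimate above avoids them.
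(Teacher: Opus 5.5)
Your argument is correct, and apart from the compactness step it is the paper's proof. The shared parts are the substitution $\psi_N=\phi_N/g_N$, the use of Lemma~\ref{averaging} to pass to the limit in the weak adjoint equation and to get $\av{\psi}=1$, the identification through the one-dimensional kernel of $L_0^*$, and the subsequence argument.

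The difference is how you obtain compactness, and there your write-up has a sign slip. Since $\phi_N>0$ and $\av{\phi_N}=1$, one has $q_-\le Q^*\phi_N\le q_+$. Hence $F_N=-Q^*\phi_N\in[-q_+,-q_-]$, not $[0,q_+]$. The contradiction you noticed, $bg_N\psi_N(x_0)\le 0$, was a symptom of this slip.

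With the correct sign, the maximum-principle argument you discarded works verbatim. At a maximum point, $bg_N\psi_N(x_0)=A_r\psi_N(x_0)+Q^*\phi_N(x_0)\le q_+$, so $\|\psi_N\|_\infty\le q_+/(q_-\min g)$. This is exactly the paper's Step~1. From there $\|A_r\psi_N\|_\infty\le C$, and \eqref{ellest} with $s=0$ gives a uniform $H^\sigma$ bound.

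Your energy estimate is unaffected by the slip. It only uses $\|F_N\|_\infty\le q_+$ and $\av{\psi_N}\le 1/\min g$. Via \eqref{symbol}, it yields a uniform $H^{\sigma/2}$ bound, which suffices for Rellich.

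So the two routes trade off as follows. The paper's pointwise argument gives stronger $L^\infty$ and $H^\sigma$ control. Yours needs only the quadratic form of $-A_r$ and an $L^1$ bound on $\psi_N$. Both avoid derivatives of $g_N$, which is the essential point.
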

\begin{proof}
The operators in \eqref{highfreqoperators} satisfy the hypotheses of Theorem~\ref{periodictheorem} with $c=g_N^{-1}$ and $c=1$, respectively. We keep $r,q,g,\ell$ fixed throughout this proof. All constants below may depend on these data, but are independent of $N$.

\smallskip\noindent
\emph{Step 1. Uniform estimates for $w_N=\phi_N/g_N$.}
 By \eqref{Qdefinition}, the adjoint of $Q$ with respect to normalized integration is
$
 Q^*\varphi(x)=\ell^{-n}\int_{Q_\ell}q(z,x)\varphi(z)\dd z$.
Since $A_r$ is self-adjoint, \eqref{highfreqoperators} and $L_N^*\phi_N=0$ give
\begin{equation}\label{wNeq}
 (-A_r+bg_N)w_N=Q^*\phi_N.
\end{equation}
By \eqref{qassumption} and $\av{\phi_N}=1$, we have $q_-\le Q^*\phi_N\le q_+$. Let $x_N$ be a maximum point of $w_N$. Since $w_N>0$, $b\ge q_-$, and $-A_rw_N(x_N)\ge0$, \eqref{wNeq} yields
\begin{equation*}
 q_-\min_{[0,1]}g\,\|w_N\|_\infty
 \le b(x_N)g_N(x_N)w_N(x_N)
 \le Q^*\phi_N(x_N)\le q_+.
\end{equation*}
It follows that $\|w_N\|_\infty\le C$ and $\|\phi_N\|_\infty\le C$. Returning to \eqref{wNeq}, we obtain
\begin{equation*}
 \|A_rw_N\|_\infty
 \le q_+\|g\|_\infty\|w_N\|_\infty+q_+\le C.
\end{equation*}
Applying \eqref{ellest} with $s=0$ gives
\begin{equation}\label{wcompact}
 \|w_N\|_{H^\sigma}
 \le C_r\bigl(\|A_rw_N\|_2+\|w_N\|_2\bigr)\le C.
\end{equation}
Since $\sigma>0$, the compact embedding $H^\sigma(\T)\subset  L^2(\T)$ shows that $\{w_N\}$ is relatively compact in $L^2(\T)$.

\smallskip\noindent
\emph{Step 2. The limit equation and its normalization.}
By \eqref{wcompact}, any subsequence has a further subsequence, indexed by $\{N_m\}$, and a function $w\in H^\sigma(\T)$ such that
$w_{N_m}\rightharpoonup w$ weakly in $H^\sigma(\T)$
and
\begin{equation}\label{wstrongsubsequence}
 \lim_{m\to\infty}\|w_{N_m}-w\|_2=0.
\end{equation}
We first identify the weak limit of $\phi_{N_m}=g_{N_m}w_{N_m}$. For every $\Phi\in L^2(\T)$,
\begin{equation*}
 \left|\av{\Phi(\phi_{N_m}-w)}\right|
 \le\|g\|_\infty\|\Phi\|_2\|w_{N_m}-w\|_2
       +\left|\av{\Phi w(g_{N_m}-1)}\right|.
\end{equation*}
The first term tends to zero by \eqref{wstrongsubsequence}. Since $\Phi w\in L^1(\T)$ and $\int_0^1g=1$, the second tends to zero by \eqref{averagelimit}. Therefore
\begin{equation}\label{phiweaksubsequence}
 \phi_{N_m}\rightharpoonup w\quad\text{weakly in }L^2(\T).
\end{equation}
Taking $\Phi=1$ in \eqref{phiweaksubsequence} and using $\av{\phi_{N_m}}=1$ gives $\av{w}=1$.

For every $\varphi\in C^\infty(\T)$, the adjoint equation and \eqref{highfreqoperators} give
\begin{equation*}
 \av{w_{N_m}A_r\varphi}
 +\av{\phi_{N_m}(Q\varphi-b\varphi)}=0.
\end{equation*}
Since $A_r\varphi,Q\varphi-b\varphi\in L^2(\T)$ are independent of $m$, \eqref{wstrongsubsequence} and \eqref{phiweaksubsequence} imply
\begin{equation*}
 \av{wL_0\varphi}
 =\lim_{m\to\infty}
   \bigl(\av{w_{N_m}A_r\varphi}
         +\av{\phi_{N_m}(Q\varphi-b\varphi)}\bigr)=0.
\end{equation*}
Thus $L_0^*w=0$ in the sense of distributions. By the kernel characterization in Theorem~\ref{periodictheorem}\,\textup{(ii)}, $w$ is a constant multiple of $\phi_0$. Since $\av{w}=\av{\phi_0}=1$, we obtain $w=\phi_0$.

\smallskip\noindent
\emph{Step 3. Convergence of the full sequence.}
Steps~1 and~2 show that every subsequence has a further subsequence converging to $\phi_0$ in $L^2(\T)$. Hence $w_N\to\phi_0$ in $L^2(\T)$. Using $\phi_N=g_Nw_N$, we conclude that
\begin{equation*}
 \|\phi_N-g_N\phi_0\|_2
 \le\|g\|_\infty\|w_N-\phi_0\|_2\longrightarrow0,
\end{equation*}
which proves \eqref{strongapprox}.
\end{proof}

We now prove Proposition~\ref{onestep}. To this aim, we first use \eqref{smalljumpestimate} to choose $r$, and then apply Lemma~\ref{highfreq} with this radius $r$ fixed to choose $N$.

\begin{proof}[Proof of Proposition~\ref{onestep}]
For $r>0$, define
\begin{equation}\label{resetcoefficient}
 \bar a_r(x,y)=\chi_r(y)a_*+(1-\chi_r(y))a(x,y).
\end{equation}
By \eqref{gchoice}, \eqref{newcoefficient}, and \eqref{resetcoefficient}, both $\bar a_r$ and $a_{r,N}$ satisfy \eqref{ellipticity}. They are smooth for $y\ne0$, $\ell$-periodic in $x$, and radial in $y$. They equal $a_*$ and $a_*/g_N$, respectively, for $|y|\le r/2$, and equal $a_*$ for sufficiently large $|y|$. Thus both satisfy Definition~\ref{finiteclass}. Denote by $\bar\phi_r$ and $\phi_{r,N}$ the normalized positive adjoint solutions  for the operators related to $\bar a_r$ and $a_{r,N}$, respectively(see Definition~\ref{adjsol}).

\smallskip\noindent
\emph{Step 1. An upper bound for $J(\bar\phi_r)$ and the choice of $r$.}
Define
\begin{equation}\label{concentrationerror}
 \varepsilon=\frac{\tau-\rho_g}{1+\rho_g}J(\phi)>0.
\end{equation}
For every positive smooth $\widetilde\phi$, the concavity of the square root gives
\begin{equation}\label{Jtangent}
 J(\widetilde\phi)
 \le J(\phi)+\frac12\av{\phi^{-1/2}(\widetilde\phi-\phi)}.
\end{equation}
To estimate the last term with $\widetilde\phi=\bar\phi_r$, first observe that
$
 \av{(\phi^{-1/2}-J(\phi))\phi}
 =J(\phi)-J(\phi)\av{\phi}=0$.
Theorem~\ref{periodicsolvability}\,\textup{(iii)} therefore gives a unique smooth solution of
\begin{equation}\label{Jpoisson}
 L_a\psi=\phi^{-1/2}-J(\phi),\qquad \av{\psi\phi}=0.
\end{equation}
This function $\psi$ depends on $a$ and $\phi$, but is independent of $r$ and $N$.

Using \eqref{Jpoisson}, the normalizations $\av{\bar\phi_r}=\av{\phi}=1$, and $L_{\bar a_r}^*\bar\phi_r=0$, we obtain
\begin{equation}\label{Jduality}
 \av{\phi^{-1/2}(\bar\phi_r-\phi)}=\av{(\phi^{-1/2}-J(\phi))\bar\phi_r}=\av{L_a\psi\,\bar\phi_r}=\av{(L_a-L_{\bar a_r})\psi\,\bar\phi_r}.
\end{equation}
By \eqref{resetcoefficient}, $a=\bar a_r$ for $|y|\ge r$. Since $\bar\phi_r>0$ and $\av{\bar\phi_r}=1$, \eqref{Jduality} and \eqref{smalljumpestimate} give
\begin{equation*}
 \left|\av{\phi^{-1/2}(\bar\phi_r-\phi)}\right|
 \le\|(L_a-L_{\bar a_r})\psi\|_\infty
 \le C_*r^{2-\sigma}\|D^2\psi\|_\infty,
\end{equation*}
where $C_*$ is given by Lemma \ref{smalljump}.
Substituting this estimate into \eqref{Jtangent} yields
\begin{equation}\label{resetJ}
 J(\bar\phi_r)
 \le J(\phi)+\frac{C_*}{2}r^{2-\sigma}\|D^2\psi\|_\infty.
\end{equation}
Since $2-\sigma>0$ and $\psi$ is independent of $r$, choose
\begin{equation*}
 0<r<\min\left\{R_0,
 \left(\frac{2\varepsilon}
 {C_*(1+\|D^2\psi\|_\infty)}\right)^{1/(2-\sigma)}\right\}.
\end{equation*}
By \eqref{resetJ}, this choice gives
\begin{equation}\label{resetJsmall}
 J(\bar\phi_r)\le J(\phi)+\varepsilon.
\end{equation}

\smallskip\noindent
\emph{Step 2. The high-frequency limit for the selected $r$.}
Keep $r$ fixed. Let $q_r$ be given by \eqref{periodkernel} for $a$ and this selected $r$. By Lemma~\ref{periodization} below, $q_r$ satisfies \eqref{qassumption}. Denote by $Q_r$ and $b_r$ the corresponding operator and function defined in \eqref{Qdefinition}. Then \eqref{newcoefficient}, \eqref{resetcoefficient}, and \eqref{tailrepresentation} give
\begin{equation}\label{perturbedoperators}
 L_{\bar a_r}=A_r+Q_r-b_r,\qquad
 L_{a_{r,N}}=g_N^{-1}A_r+Q_r-b_r.
\end{equation}
Since $q_r$ is independent of $N$, Lemma~\ref{highfreq}  with $\phi_0=\bar\phi_r$ applies to the operators in \eqref{perturbedoperators} and gives 
\begin{equation}\label{perturbedadjointlimit}
 \lim_{N\to\infty}\|\phi_{r,N}-g_N\bar\phi_r\|_2=0.
\end{equation}
Using $|\sqrt{s}-\sqrt{t}|\le\sqrt{|s-t|}$ for $s,t\ge0$ and the normalized measure on $\T$, we obtain
\begin{equation*}
 \left|J(\phi_{r,N})-\av{\sqrt{g_N\bar\phi_r}}\right|\le\av{|\phi_{r,N}-g_N\bar\phi_r|^{1/2}}\le\|\phi_{r,N}-g_N\bar\phi_r\|_1^{1/2}\le\|\phi_{r,N}-g_N\bar\phi_r\|_2^{1/2}
 \rightarrow0,
\end{equation*}
as $N\rightarrow\infty$. Applying \eqref{averagelimit} with $G=\sqrt{g}$ and $\Phi=\sqrt{\bar\phi_r}$ gives
$
 \lim_{N\to\infty}\av{\sqrt{g_N\bar\phi_r}}
 =\rho_gJ(\bar\phi_r)$.
Combining the last two relations, we conclude that
\begin{equation}\label{Jhighfreqlimit}
 \lim_{N\to\infty}J(\phi_{r,N})=\rho_gJ(\bar\phi_r).
\end{equation}

\smallskip\noindent
\emph{Step 3. Selection of $N$ and the contraction estimate.}
By \eqref{Jhighfreqlimit}, choose an integer $N>N_0$ such that
$
 J(\phi_{r,N})\le\rho_gJ(\bar\phi_r)+\varepsilon$.
Combining this inequality with \eqref{resetJsmall} and \eqref{concentrationerror}, we obtain
\begin{equation*}
 J(\phi_{r,N})
 \le\rho_gJ(\phi)+(1+\rho_g)\varepsilon
 =\tau J(\phi).
\end{equation*}
This proves \eqref{onestepcontraction} and completes the proof.
\end{proof}

\section{Proofs of the main results}\label{sec:fixed}

We iterate the contraction estimate from Proposition~\ref{onestep} to prove Theorem~\ref{main}.

\begin{proof}[Proof of Theorem~\ref{main}]
\smallskip\noindent
\emph{Step 1. Auxiliary coefficients and Poisson solutions.}
Fix $a_*$ as in Section 2, $g$ and $\rho_g$ as in \eqref{gchoice}-\eqref{rhochoice},  $\tau\in(\rho_g,1)$, the functions and constants $\theta,D,d,\ell,H,\zeta$ from Section~\ref{sec:poisson}, and the cutoff $\eta$ used in \eqref{extractiondata}. These choices are independent of $j$. Set $a_0=a_*$ and $\phi_0=1$. Applying Proposition~\ref{onestep} with $R_0=1$ and $N_0=0$ gives $r_1,N_1,a_1,\phi_1$ such that $J(\phi_1)\le\tau$.

Suppose $a_j,\phi_j,r_j,N_j$ have been constructed. Define
\begin{equation}\label{hjdefinition}
 h_j=\eta(\phi_j),\qquad F_j=\{\phi_j>1/2\},\qquad c_j=\av{h_j\phi_j}.
\end{equation}
Theorem~\ref{periodicsolvability} and the definition of $c_j$ give a unique smooth periodic solution of
\begin{equation*}
 L_{a_j}v_j=c_j-h_j,\qquad\min_\T v_j=0.
\end{equation*}
Proposition~\ref{localization}\,\textup{(i)} gives $0\le v_j\le d$. Define $z_j=v_j+2d-4\theta d$. Since $z_j$ has bounded second derivatives, the number
\begin{equation}\label{Rbudget}
 R_{j+1}=\min\left\{\frac{r_j}{4},
 \left[\frac1{4C_*(1+\|D^2z_j\|_\infty)}\right]^{1/(2-\sigma)}\right\}
\end{equation}
is positive. Apply Proposition~\ref{onestep} to $a=a_j$ and $\phi=\phi_j$, with radius bound $R_{j+1}$ and frequency lower bound $N_j$. It gives $0<r_{j+1}<R_{j+1}$, $N_{j+1}>N_j$, and
\begin{equation}\label{recursion}
 a_{j+1}(x,y)=\chi_{r_{j+1}}(y)\frac{a_*}{g(N_{j+1}x_1/\ell)}
 +(1-\chi_{r_{j+1}}(y))a_j(x,y),
\end{equation}
with $J(\phi_{j+1})\le\tau J(\phi_j)$. In this construction, $z_j$ and hence $R_{j+1}$ are determined before $r_{j+1}$ and $N_{j+1}$ are chosen. The induction is therefore well-defined for every $j\ge1$. Induction gives $J(\phi_j)\le\tau^j$. Lemma~\ref{extraction} therefore yields
\begin{equation}\label{Fdecay}
 \frac{|F_j|}{\ell^n}\le\sqrt2\tau^j,
\end{equation}
and
\begin{equation}\label{cjbound}
 c_j\ge1-\sqrt2\tau^j.
\end{equation}
The choice \eqref{Rbudget} also ensures that
\begin{equation}\label{protected}
 C_*r_{j+1}^{2-\sigma}\|D^2z_j\|_\infty
 \le\frac{\|D^2z_j\|_\infty}{4(1+\|D^2z_j\|_\infty)}\le\frac14.
\end{equation}

\smallskip\noindent
\emph{Step 2. The limiting coefficient.}
Since $r_{j+1}<r_j/4$, we have $r_j\le4^{1-j}r_1$ and $\lim_{j\to\infty}r_j=0$. Since $\chi_{r_{j+1}}(y)\frac{a_*}{g(N_{j+1}x_1/\ell)}$ in \eqref{recursion} is supported in $|y|<r_{j+1}$,
\begin{equation*}
 a_k(x,y)=a_j(x,y)\quad\text{for }k\ge j\text{ and }|y|\ge r_{j+1}.
\end{equation*}
Consequently, for every $y\ne0$ the coefficient is unchanged after finitely many stages. Define
\begin{equation*}
 a_\infty(x,y)=\lim_{j\to\infty}a_j(x,y).
\end{equation*}
Let $K\subset\R^n\times(\R^n\setminus\{0\})$ be compact, and set $\delta_K=\min_K|y|>0$. Choose $m_K$ with $r_{m_K+1}<\delta_K/2$. Then $a_\infty=a_{m_K}$ on $\{|y|>\delta_K/2\}$, a neighborhood of $K$. This proves that $a_\infty$ is smooth for $y\ne0$. The bounds \eqref{ellipticity}, periodicity in $x$, and radiality in $y$ also hold for $a_\infty$. Moreover, $a_\infty=a_*$ for $|y|\ge r_1$.

\smallskip\noindent
\emph{Step 3. Perturbation estimates for the functions $z_j=v_j+2d-4d\theta$.}
Fix $j$. The construction gives $a_\infty=a_j$ whenever $|y|\ge r_{j+1}$. Since $z_j\in C_b^2(\R^n)$, Lemma~\ref{smalljump} and \eqref{protected} imply
\begin{equation*}
 \|(L_{a_\infty}-L_{a_j})z_j\|_\infty
 \le C_*r_{j+1}^{2-\sigma}\|D^2z_j\|_\infty\le\frac14.
\end{equation*}
Thus \eqref{errorbudget} holds for every $z_j$ with $\widetilde a=a_\infty$. The estimate depends only on $z_j$ and the radius $r_{j+1}$ selected after solving its Poisson equation.

\smallskip\noindent
\emph{Step 4. Localization and vanishing support measure.}
Choose $j_0$ such that $\sqrt2\tau^j\le1/4$ for $j\ge j_0$. Equation~\eqref{cjbound} then gives $c_j\ge3/4$ for $j\ge j_0$. The estimate from Step~3 allows us to apply Proposition~\ref{localization}\,\textup{(ii)} with $a=a_j$ and $\widetilde a=a_\infty$. Hence the functions $u_j=H(z_j)$ and $f_j=\zeta h_j$
belong to $C_c^\infty(B_1)$, have $u_j\ge0$ and $0\le f_j\le1$, and satisfy
\begin{equation*}
 L_{a_\infty}u_j\ge-f_j\quad\text{in }B_1,\qquad u_j\ge d\quad\text{on }B_{1/4}.
\end{equation*}
Let $F_j^{\mathrm{per}}$ denote the periodic extension of $F_j$ in \eqref{hjdefinition}, and set $E_j=B_1\cap F_j^{\mathrm{per}}$. This set is open. Since $\supp\zeta\subset B_1$ and  \eqref{hjdefinition} gives $\supp h_j\subset\{\phi_j\ge1\}$,
\begin{equation*}
 \supp f_j\subset\supp\zeta\cap\{\phi_j\ge1\}
 \subset B_1\cap F_j^{\mathrm{per}}=E_j.
\end{equation*}
The ball $B_1$ intersects at most $M_{\ell,n}$ period cells, where $M_{\ell,n}<\infty$ is independent of $j$. Equation~\eqref{Fdecay} gives
\begin{equation*}
 |E_j|\le M_{\ell,n}|F_j|
 \le M_{\ell,n}\ell^n\sqrt2\tau^j.
\end{equation*}
It follows that $|E_j|\to0$ as $j\to\infty$. 
After omitting the terms with $j<j_0$ and reindexing
the sequences, we obtain all the conclusions of
Theorem~\ref{main} with $a=a_\infty$.
\end{proof}

We next prove the two corollaries. The first follows directly from the vanishing support measure; the second uses Dirichlet solvability and comparison with the smooth subsolutions.

\begin{proof}[Proof of Corollary~\ref{noestimate}]
Since $0\le f_j\le\mathbf1_{E_j}$, one has $\|f_j\|_{L^p(B_1)}\le|E_j|^{1/p}$. If the estimate held, then
\begin{equation*}
 0<d\le\sup_{B_1}u_j\le C_a|E_j|^{1/p},
\end{equation*}
which contradicts $|E_j|\to0$. 
\end{proof}

\begin{proof}[Proof of Corollary~\ref{equationcounterexample}]
Fix $j$.  Since $a$ is smooth for $y\ne0$ and satisfies
\eqref{ellipticity}, applying Theorem~5.6 in Mou~\cite{Mou17} with
$I(x,t,\varphi)=-L_a\varphi(x)-f_j(x)$ and zero exterior
data, we get a bounded continuous viscosity solution
$w_j$ of \eqref{dirichletcounterexample}. We prove that $w_j\ge u_j$ in $\R^n$.
Suppose instead that
$
 M:=\max_{\R^n}(u_j-w_j)>0$.
Since $w_j$ and $u_j$ vanish outside $B_1$,   $M=u_j(x_0)-w_j(x_0)$ for some $x_0\in B_1$.
Choose $\xi\in C_c^\infty(\R^n)$ with $0\le\xi\le1$
and $\xi=1$ on $B_1$, and define
$
 \varphi=u_j-M\xi$.
In $B_1$, we have $\varphi=u_j-M\le w_j$, with equality
at $x_0$. Outside $B_1$, we have
$\varphi=-M\xi\le0=w_j$.
Thus $\varphi$ touches $w_j$ globally from below at $x_0$.
Moreover, evenness and \eqref{ellipticity} give
\begin{equation*}
 L_a\xi(x_0)
 =2\int_{\R^n}
 \bigl(\xi(x_0+y)-1\bigr)
 \frac{a(x_0,y)}{|y|^{n+\sigma}}\dd y<0,
\end{equation*}
because $\xi\le1$ and $\xi$ has compact support. Let $d>0$ and $u_j$ be given by Theorem ~\ref{main},
the viscosity supersolution inequality and
$L_au_j\ge-f_j$ therefore yield a contradiction
$$-f_j(x_0)\ge L_a\varphi(x_0)
 =L_au_j(x_0)-M L_a\xi(x_0)\ge-f_j(x_0)-M L_a\xi(x_0)
 >-f_j(x_0);$$
hence $w_j\ge u_j\ge0$ in $\R^n$, and
$w_j\ge d$ on $B_{1/4}$. Finally, $\widetilde w_j=-w_j$ satisfies
$L_a\widetilde w_j=f_j$ in $B_1$ in the viscosity sense
and $\widetilde w_j=0$ in $\R^n\setminus B_1$.
Since $\{f_j>0\}\subset E_j$, we have
$
 \lim_{j\to\infty}|\{f_j>0\}|=0$.
The lower bound for $w_j$ gives
$\|\widetilde w_j\|_{L^\infty(B_1)}\ge d$ for every $j$.
\end{proof}

\begin{remark}\label{radialdependence}
The limiting coefficient depends nontrivially on $|y|$. Since $r_2<r_1/4$, every perturbation after stage one vanishes on the annulus $r_2\le|y|\le r_1/2$. Thus
\begin{equation*}
 a_\infty(x,y)=\frac{a_*}{g(N_1x_1/\ell)}\quad\text{for }r_2\le|y|\le r_1/2,
\end{equation*}
whereas $a_\infty(x,y)=a_*$ for $|y|\ge r_1$. Choose $x_0$ with $g(N_1x_{0,1}/\ell)\ne1$, a nonzero $y_0$ in the annulus, and $t_0>0$ with $|t_0y_0|\ge r_1$. Let $x_{0,1}$ be the first component of $x_0$.  Then
\begin{equation*}
 a_\infty(x_0,t_0y_0)=a_*\ne\frac{a_*}{g(N_1x_{0,1}/\ell)}=a_\infty(x_0,y_0).
\end{equation*}
Thus $a_\infty$ is not homogeneous of degree zero in $y$, whereas every coefficient in \eqref{quadratic-kernel} is homogeneous of degree zero in $y$.
\end{remark}
\appendix
\section{Periodic solvability}\label{sec:linear}

\begin{lemma}\label{fourier}
For every $r>0$, let $A_r$ be given by \eqref{Ardefinition}.  Then $1-A_r:H^{s+\sigma}(\T)\to H^s(\T)$ is an isomorphism and \eqref{ellest} holds for $s\in\R$. The operator $A_r$ is self-adjoint and nonpositive in $L^2(\T)$ with domain $H^\sigma(\T)$.
\end{lemma}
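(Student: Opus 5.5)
The plan is to diagonalize $A_r$ in the Fourier basis $e_k$ and reduce every assertion to properties of its symbol. For $v=e_k$ we have $\delta_y e_k(x)=e_k(x)\bigl(2\cos(2\pi k\cdot y/\ell)-2\bigr)$, so $A_re_k=-m_r(k)e_k$ with
\begin{equation*}
 m_r(k)=2a_*\int_{\R^n}\bigl(1-\cos(2\pi k\cdot y/\ell)\bigr)\frac{\chi_r(y)}{|y|^{n+\sigma}}\dd y\ge0 .
\end{equation*}
The integral converges absolutely, since $1-\cos t\le t^2/2$ near the origin and $\chi_r$ has compact support. The symbol is real and even in $k$. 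For $v\in C^\infty(\T)$ we can interchange the Fourier sum with the $y$-integral, because $|\delta_yv|\le\|D^2v\|_\infty|y|^2$ by \eqref{incrementbound}. Hence $\widehat{A_rv}(k)=-m_r(k)\widehat v(k)$, and this extends by continuity and duality to distributions. With this formula in hand, self-adjointness on the domain $H^\sigma(\T)$ and nonpositivity ($(A_rv,v)_2=-\sum m_r(k)|\widehat v(k)|^2\le0$) follow directly, once the symbol bounds below are established.

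The key step is a two-sided bound
\begin{equation*}
 c_r|k|^\sigma-C\le m_r(k)\le C|k|^\sigma
\end{equation*}
for $k\in\Z^n$. For the upper bound, drop $\chi_r\le1$, rescale $y=\ell y'/(2\pi|k|)$, and use rotation invariance. This gives $m_r(k)\le C|k|^\sigma\int_{\R^n}(1-\cos y'_1)|y'|^{-n-\sigma}\dd y'$, and the last integral is finite because $0<\sigma<2$.

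For the lower bound, restrict to $|y|<r/2$, where $\chi_r=1$. The same rescaling yields
\begin{equation*}
 m_r(k)\ge 2a_*(2\pi|k|/\ell)^{\sigma}\int_{|y'|<\pi r|k|/\ell}\bigl(1-\cos y_1'\bigr)|y'|^{-n-\sigma}\dd y' .
\end{equation*}
The integral on the right increases in $|k|$. For $|k|\ge1$ it is bounded below by its value at radius $\pi r/\ell$, which is a positive constant depending on $r$. Hence $m_r(k)\ge c_r|k|^\sigma$ for $k\ne0$, and $m_r(0)=0$. Keeping track of this $r$-dependence is the only mildly delicate point. It is harmless, because the constants in \eqref{ellest} are allowed to depend on $r$.

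Granting these bounds, $1+m_r(k)\simeq(1+|k|^2)^{\sigma/2}$ with constants depending on $r$. Therefore $1-A_r$ acts as the Fourier multiplier $1+m_r(k)$ and is an isomorphism $H^{s+\sigma}(\T)\to H^s(\T)$ for every $s\in\R$, with inverse the multiplier $(1+m_r(k))^{-1}$. Consequently
\begin{equation*}
 \|v\|_{H^{s+\sigma}}\le C_{r,s}\|(1-A_r)v\|_{H^s}\le C_{r,s}\bigl(\|A_rv\|_{H^s}+\|v\|_{H^s}\bigr),
\end{equation*}
which is \eqref{ellest}. Finally, $A_r$ is symmetric on $H^\sigma$ because its symbol is real, and $1-A_r$ is surjective from $H^\sigma$ onto $L^2$. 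A symmetric operator with $\operatorname{ran}(1-A_r)=L^2$ is self-adjoint, so $A_r$ is self-adjoint with domain $H^\sigma(\T)$.
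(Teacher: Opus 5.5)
Your proposal is correct and follows the paper's route: diagonalize $A_r$ on the $e_k$, prove the two-sided bound $1+\mu_r(k)\simeq(1+|k|^2)^{\sigma/2}$ on the symbol, and read off the isomorphism, \eqref{ellest}, symmetry and nonpositivity from Parseval. The differences are minor. You get $m_r(k)\ge c_r|k|^\sigma$ for every $k\ne0$ by restricting to $|y|<r/2$ and using monotonicity of the truncated integral, whereas the paper bounds the tail error $\kappa|k|^\sigma-\mu_r(k)\le Cr^{-\sigma}$ and treats the finitely many small $k$ separately. For self-adjointness you use the criterion that a symmetric operator with $1-A_r$ surjective is self-adjoint, whereas the paper identifies $D(A_r^*)$ directly by Fourier testing and the regularity statement.
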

\begin{proof}  With  the Fourier and Sobolev notation in \eqref{fouriernotation}-\eqref{sobolevnotation}, it follows from the definitions of $A_r, e_k$ in \eqref{Ardefinition}  and \eqref{fouriernotation} that 
 $A_re_k=-\mu_r(k)e_k$, where
\begin{equation}\label{murdefinition}
 \mu_r(k)=2a_*\int_{\R^n}(1-\cos(2\pi k\cdot y/\ell))
 \frac{\chi_r(y)}{|y|^{n+\sigma}}\dd y\ge0.
\end{equation}
For $k\ne0$, choose an orthogonal matrix $O$ such that
$Oe_1=k/|k|$ and make the change of variables
$y=\ell Oz/(2\pi|k|)$. Then
\begin{equation*}
 2a_*\int_{\R^n}
 \frac{1-\cos(2\pi k\cdot y/\ell)}{|y|^{n+\sigma}}\dd y=2a_*\left(\frac{2\pi}{\ell}\right)^\sigma
 |k|^\sigma
 \int_{\R^n}\frac{1-\cos z_1}{|z|^{n+\sigma}}\dd z=\kappa|k|^\sigma,
\end{equation*}
where
\begin{equation*}
 \kappa:=2a_*\left(\frac{2\pi}{\ell}\right)^\sigma
 \int_{\R^n}\frac{1-\cos z_1}{|z|^{n+\sigma}}\dd z
 \in(0,\infty).
\end{equation*}
Since $1-\chi_r$ vanishes on $B_{r/2}$,
\begin{equation*}
 \begin{aligned}
 	0\le\kappa|k|^\sigma-\mu_r(k)
 	&=2a_*\int_{\R^n}
 	\frac{(1-\cos(2\pi k\cdot y/\ell))(1-\chi_r(y))}{|y|^{n+\sigma}}\dd y\\
 	&\le4a_*\int_{|y|\ge r/2}|y|^{-n-\sigma}\dd y
 	=\frac{4a_*|\mathbb S^{n-1}|}{\sigma}(r/2)^{-\sigma}=:C r^{-\sigma}.
 \end{aligned}
\end{equation*}
Thus, there exists $K_0>0$ such that $\mu_r(k)\ge\kappa|k|^\sigma/2$ for all $|k|>K_0$.
For the remaining finitely many $k$ with $|k|\leq K_0$, $1+\mu_r(k)\ge1$. Together with $\mu_r(k)\le\kappa|k|^\sigma$, we get
\begin{equation}\label{symbol}
	c_r(1+|k|^2)^{\sigma/2}\le1+\mu_r(k)
	\le C_r(1+|k|^2)^{\sigma/2},\qquad k\in\Z^n.
\end{equation}
Fix $s\in\R$. By \eqref{sobolevnotation} and
\eqref{symbol},
\begin{equation}\label{updowm}
 c_r\|v\|_{H^{s+\sigma}}
 \le\|(1-A_r)v\|_{H^s}
 \le C_r\|v\|_{H^{s+\sigma}},
 \qquad v\in H^{s+\sigma}(\T).
\end{equation}
Thus $1-A_r:H^{s+\sigma}(\T)\to H^s(\T)$ is bounded
and injective. For any $f\in H^s(\T)$, define $v$ by
$\widehat v(k)=\widehat f(k)/(1+\mu_r(k))$.
The lower bound in \eqref{symbol} gives
\begin{equation*}
 \|v\|_{H^{s+\sigma}}^2
 =\sum_{k\in\Z^n}
 \frac{(1+|k|^2)^{s+\sigma}|\widehat f(k)|^2}
      {(1+\mu_r(k))^2}
 \le c_r^{-2}\|f\|_{H^s}^2.
\end{equation*}
Hence $v\in H^{s+\sigma}(\T)$ and $(1-A_r)v=f$.
This proves that $1-A_r$ is an isomorphism.

If $v$ is a distribution with
$v,A_rv\in H^s(\T)$, then $f=v-A_rv\in  H^s(\T)$, and a similar Fourier analysis shows that
$v\in H^{s+\sigma}(\T)$. Therefore,
\eqref{updowm} and the triangle inequality yield
\begin{equation*}
 \|v\|_{H^{s+\sigma}}
 \le c_r^{-1}\|(1-A_r)v\|_{H^s}
 \le c_r^{-1}
 \bigl(\|v\|_{H^s}+\|A_rv\|_{H^s}\bigr),
\end{equation*}
which proves \eqref{ellest}.

 Next, regard $A_r$ as an operator in
 $L^2(\mathbb T)$ with domain
 $D(A_r)=H^\sigma(\mathbb T)$.
 This domain is dense in $L^2(\mathbb T)$.
 Parseval's identity gives
 \[
 (A_ru,v)_2
 =
 -\sum_{k\in\mathbb Z^n}
 \mu_r(k)\widehat u(k)\overline{\widehat v(k)}
 =
 (u,A_rv)_2,
 \qquad u,v\in H^\sigma(\mathbb T).
 \]
 Thus $A_r$ is symmetric.
 
 Conversely, let $v\in D(A_r^*)$ and set $h=A_r^*v$.
 By definition,
$
 (A_ru,v)_2=(u,h)_2$
for every $u\in H^\sigma(\mathbb T)$.
 Testing with $u=\cos(2\pi k\cdot x/\ell)$ and $u=\sin(2\pi k\cdot x/\ell)$, and combing the resulting identities, yields
 \[
 \widehat h(k)=-\mu_r(k)\widehat v(k),
 \qquad k\in\mathbb Z^n.
 \]
 Hence $A_rv=h$ in the sense of distributions.
 Since $v,h\in L^2(\mathbb T)$, the regularity statement
 proved above gives $v\in H^\sigma(\mathbb T)$.
 It follows that $D(A_r^*)=D(A_r)$ and $A_r^*=A_r$.
 
 Finally, for every $v\in H^\sigma(\mathbb T)$,
$
 (A_rv,v)_2
 =
 -\sum_{k\in\mathbb Z^n}
 \mu_r(k)|\widehat v(k)|^2
 \le0$. 
 Thus $A_r$ is self-adjoint and nonpositive in
 $L^2(\mathbb T)$ with domain $H^\sigma(\mathbb T)$.
\end{proof}

\Needspace{6\baselineskip}
\begin{lemma}\label{periodization}
Let $a$ be an auxiliary coefficient as in
Definition~\ref{finiteclass}, and fix $r>0$.
Define $q$ by \eqref{periodkernel} and $Q,b$ by
\eqref{Qdefinition}. Then $q$ satisfies
\eqref{qassumption}, and \eqref{tailrepresentation}
holds for every $v\in C^\infty(\T)$.
If, in addition, $r\le\rho$, then
\begin{equation*}
 L_av=cA_rv+Qv-bv,
 \qquad v\in C^\infty(\T),
\end{equation*}
where $\rho$ and $c$ are as in
Definition~\ref{finiteclass}.
\end{lemma}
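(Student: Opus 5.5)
The plan is to prove the three assertions of Lemma~\ref{periodization} in order: the properties \eqref{qassumption} of $q$, the tail identity \eqref{tailrepresentation}, and then the splitting of $L_a$ when $r\le\rho$.

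\emph{Smoothness and bounds for $q$.} Since $1-\chi_r$ vanishes for $|y|<r/2$, each term of the series \eqref{periodkernel} is smooth in $(x,z)$. This uses that $a$ is smooth for $y\ne0$ and that $w=z-x+\ell k$ stays away from the singular set wherever $1-\chi_r(w)\ne0$. For large $|k|$ and $(x,z)$ in a compact set, we have $|w|\ge\ell|k|/2$. Moreover, $a(x,w)=a_*$ once $|w|>R$, so every $(x,z)$-derivative of the $k$-th term is bounded by $C_m|k|^{-n-\sigma-m}$, which is summable. Hence the series converges in every $C^m$, and $q\in C^\infty(\T\times\T)$. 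Periodicity in $x$ and in $z$ follows by reindexing $k$, using the $\ell$-periodicity of $a$ in $x$.

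For the bounds, the upper bound follows from $a\le\Lambda$ and the convergence of the lattice sum of $|w|^{-n-\sigma}$ over the region $|w|\ge r/2$, uniformly in $(x,z)$. For the lower bound, pick a $k$ with $|z-x+\ell k|\in[r,r+\sqrt n\ell]$. Such a $k$ exists, since the translates of a cell cover $\R^n$ and we may shift the cell far enough to reach $|w|\ge r$. That single term alone is at least $2\ell^n\lambda(r+\sqrt n\ell)^{-n-\sigma}>0$.

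\emph{Tail identity.} Because $a$ is even in $y$, we write
\[
\int\delta_yv(x)\frac{(1-\chi_r)a}{|y|^{n+\sigma}}dy=2\int (v(x+y)-v(x))\frac{(1-\chi_r(y))a(x,y)}{|y|^{n+\sigma}}dy .
\]
Both pieces converge absolutely, since the kernel is integrable away from the origin. Next substitute $y=z-x+\ell k$ with $z\in Q_\ell$ and sum over $k\in\Z^n$. Periodicity gives $v(x+y)=v(z)$, and Fubini/Tonelli applies because the summands are nonnegative or absolutely integrable. The first term then becomes $\ell^{-n}\int_{Q_\ell}q(x,z)v(z)dz=Qv(x)$. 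The same computation applied to $v(x)$ times the kernel gives $b(x)v(x)$.

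\emph{Splitting of $L_a$.} Suppose $r\le\rho$. On $\supp\chi_r\subset B_r\subset B_\rho$ we have $a(x,y)=a_*c(x)$. Splitting $a=\chi_r a+(1-\chi_r)a$ therefore gives $L_av=c(x)A_rv+(Qv-bv)$. The near part converges absolutely by \eqref{incrementbound}.

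The only delicate step is the uniform control of the lattice sum together with its derivatives, which yields $C^\infty$ convergence. This is handled by the decay estimate above, so I expect no real obstacle.
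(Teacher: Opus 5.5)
Your proposal is correct and follows the paper's proof essentially step by step. You get smoothness and periodicity of $q$ from the tail terms $a_*|z-x+\ell k|^{-n-\sigma}$ and their summable derivative bounds, the tail identity from evenness and the cell decomposition $y=z-x+\ell k$, and the splitting for $r\le\rho$ from $a=a_*c(x)$ on $\supp\chi_r$. The one cosmetic difference is that you give an explicit lower bound $q\ge 2\ell^n\lambda(r+\sqrt n\,\ell)^{-n-\sigma}$ from a single lattice term, whereas the paper only notes $q>0$ pointwise and takes the minimum over the compact torus.
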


\begin{proof}
We first verify \eqref{qassumption}.
Since $1-\chi_r$ vanishes near zero, each term of the series 
in \eqref{periodkernel}, extended by zero at
$z-x+\ell k=0$, is smooth.
For $x,z\in Q_\ell$ and all sufficiently large $|k|$,
one has $|z-x+\ell k|>\max\{r,R\}$, where $R$ given by Definition~\ref{finiteclass};  therefore, 
$$ \frac{(1-\chi_r(z-x+\ell k))a(x,z-x+\ell k)}
 {|z-x+\ell k|^{n+\sigma}}=a_*|z-x+\ell k|^{-n-\sigma}.$$  For every pair of multi-indices $\alpha,\beta$,
\begin{equation*}
 \left|
 \partial_x^\alpha\partial_z^\beta
 \bigl(a_*|z-x+\ell k|^{-n-\sigma}\bigr)
 \right|
 \le C_{\alpha,\beta}
 (1+|k|)^{-n-\sigma-|\alpha|-|\beta|}
\end{equation*}
Therefore, each term of the series 
in \eqref{periodkernel} and all its differentiated series
converge uniformly on $Q_\ell\times Q_\ell$.
Reindexing the sum and using the periodicity of $a$
in $x$ give
\begin{equation*}
 q(x+\ell m,z)=q(x,z+\ell m)=q(x,z),
 \qquad m\in\Z^n.
\end{equation*}
It follows that $q\in C^\infty(\T\times\T)$.
 By the analysis above, we have  $q(x,z)>0$ for each $(x,z)$. Hence
$
 0<q_-:=\min_{\T\times\T}q
 \le q(x,z)
 \le\max_{\T\times\T}q=:q_+<\infty$.
This proves \eqref{qassumption}.

Next, fix $v\in C^\infty(\T)$.
By evenness in $y$, followed by the change of variables
$y=z-x+\ell k$ on each cell $Q_\ell-x+\ell k$,
\begin{equation*}
 \begin{aligned}
 &\int_{\R^n}\delta_yv(x)
   \frac{(1-\chi_r(y))a(x,y)}{|y|^{n+\sigma}}\dd y=2\int_{\R^n}(v(x+y)-v(x))
   \frac{(1-\chi_r(y))a(x,y)}{|y|^{n+\sigma}}\dd y\\
 &\quad=2\sum_{k\in\Z^n}\int_{Q_\ell}
   (v(z)-v(x))
   \frac{(1-\chi_r(z-x+\ell k))a(x,z-x+\ell k)}
        {|z-x+\ell k|^{n+\sigma}}\dd z\\
 &\quad=\int_\T(v(z)-v(x))q(x,z)\,d\bar z=Qv(x)-b(x)v(x).
 \end{aligned}
\end{equation*}
Here we used $v(z+\ell k)=v(z)$ and the definitions
\eqref{periodkernel} and \eqref{Qdefinition}.
This proves \eqref{tailrepresentation}.

Finally, if $r\le\rho$, then
$a(x,y)=a_*c(x)$ for $0<|y|<r$.
Since $\supp\chi_r\subset B_r$, \eqref{Ardefinition}
gives
\begin{equation*}
 \int_{\R^n}\delta_yv(x)
 \frac{\chi_r(y)a(x,y)}{|y|^{n+\sigma}}\dd y
 =c(x)A_rv(x).
\end{equation*}
Adding this identity to \eqref{tailrepresentation}
and using \eqref{operator} gives
$L_av=cA_rv+Qv-bv$.
\end{proof}
\begin{proof}[Proof of Theorem~\ref{periodicsolvability}]
\textup{(i)}
By Lemma~\ref{fourier} and the positivity of $c$, the operator
$c(A_r-1):H^\sigma(\T)\to L^2(\T)$ is an isomorphism.
The operator $Q+c-b$ is bounded on $L^2(\T)$, and the
embedding $H^\sigma(\T)\hookrightarrow L^2(\T)$ is compact.
Thus
$
 L=c(A_r-1)+(Q+c-b)
$
is a compact perturbation of an isomorphism, so $L$ is
Fredholm of index zero
\cite{Brezis11}.

We first establish the regularity of solutions.
Suppose that $v\in H^\sigma(\T)$ satisfies $Lv=f$ with
$f\in C^\infty(\T)$. Then
\begin{equation*}
 (1-A_r)v=(1-b/c)v+c^{-1}Qv-c^{-1}f.
\end{equation*}
The smoothness of $q$ gives
$\|Qv\|_{H^s}\le C_s\|v\|_2$ for $s\ge0$.
Since $b$ and $c$ are smooth and $c>0$, $(1-b/c)v+c^{-1}Qv-c^{-1}f$ belongs to $H^s(\T)$ whenever $v\in H^s(\T)$.
Lemma~\ref{fourier} therefore yields
$
 \|v\|_{H^{s+\sigma}}
 \le C_s\bigl(\|v\|_{H^s}+\|f\|_{H^s}\bigr)$ for 
$ s\ge0$,
where we have used $\|v\|_2\le\|v\|_{H^s}$.
Starting from $v\in H^\sigma(\T)$ and iterating with
$s=\sigma,2\sigma,\ldots$, we obtain
$v\in H^{j\sigma}(\T)$ for every integer $j\ge1$.
Sobolev embedding gives $v\in C^\infty(\T)$.

For  $v\in\ker L$,
denote $x_0$ a maximum point of 
$v$. We have
\begin{equation*}
 0=Lv(x_0)
 \le\int_\T\bigl(v(z)-v(x_0)\bigr)q(x_0,z)\,d\bar z
 \le q_-\bigl(\av{v}-v(x_0)\bigr)
 \le0.
\end{equation*}
Hence $\av{v(x_0)-v}=0$, and $v$ is constant. Conversely, $L1=0$.
Thus $\ker L=\operatorname{span}\{1\}$.

\medskip
\noindent\textup{(ii)}
Since $L$ is Fredholm of index zero and
$\ker L=\operatorname{span}\{1\}$, its range is closed
and has codimension one.  Choose a nonzero function $\phi$
spanning $(\operatorname{Ran}L)^\perp$. Then
$\av{\phi L\varphi}=0$ for every smooth periodic $\varphi$,
so $L^*\phi=0$.

Conversely, suppose that $\Psi\in L^2(\T)$ satisfies
$L^*\Psi=0$. For any $v\in H^\sigma(\T)$,
choose $\varphi_k\in C^\infty(\T)$ such that
$\lim_{k\to\infty}\|\varphi_k-v\|_{H^\sigma}=0$.
The boundedness of $L:H^\sigma(\T)\to L^2(\T)$ gives
$
 |\av{\Psi Lv}|
 =|\av{\Psi L(v-\varphi_k)}|
 \le C\|\Psi\|_2\|v-\varphi_k\|_{H^\sigma}.
$
Letting $k\to\infty$, we obtain
$\Psi\in(\operatorname{Ran}L)^\perp$.
Thus the adjoint nullspace in $L^2(\T)$
is $\operatorname{span}\{\phi\}$, and
\begin{equation}\label{fredholmrange}
 \operatorname{Ran}L
 =\bigl\{f\in L^2(\T):\av{f\phi}=0\bigr\}.
\end{equation}

We next prove that $\phi$ is smooth.
By the self-adjointness of $A_r$,
\begin{equation}\label{adjointformula}
 L^*\phi=A_r(c\phi)+Q^*\phi-b\phi,
 \qquad
 Q^*\phi(x)=\int_\T q(z,x)\phi(z)\,d\bar z.
\end{equation}
Define $w=c\phi\in L^2(\T)$. Then
\begin{equation}\label{adjointbootstrap}
 (1-A_r)w=Q^*\phi+(1-b/c)w.
\end{equation}
The smoothness of $q$ gives
$
 \|Q^*\phi\|_{H^s}\le C_s\|\phi\|_2$ for 
$s\ge0$.
If $w\in H^s(\T)$, Lemma~\ref{fourier} applied to
\eqref{adjointbootstrap} yields
$
 \|w\|_{H^{s+\sigma}}
 \le C_s\bigl(\|w\|_{H^s}+\|\phi\|_2\bigr)$. Via
iteration and Sobolev
embedding, we get $w\in C^\infty(\T)$.
Since $c$ is smooth and positive,
$\phi=w/c\in C^\infty(\T)$.

 We next prove positivity.
By \textup{(i)} and the maximum principle,
$\operatorname{Ran}L$ contains no smooth strictly
positive function. Indeed, a smooth solution of
$Lv=h>0$ would satisfy $h(x_0)=Lv(x_0)\le0$
at a maximum point $x_0$.
In particular, $1\notin\operatorname{Ran}L$, so
\eqref{fredholmrange} gives $\av{\phi}\ne0$.
We may therefore normalize $\phi$ by $\av{\phi}=1$.

If $\phi$ is negative somewhere, choose a nonzero
$\eta\in C^\infty(\T)$ with $\eta\ge0$ and
$\supp\eta\subset\{\phi<0\}$. Then
$\av{\eta\phi}<0$, and the smooth function
$
 h=1-{\eta}/{\av{\eta\phi}}\ge1
$
satisfies $\av{h\phi}=0$.
By \eqref{fredholmrange}, $h\in\operatorname{Ran}L$.
Since $h\ge1$, this contradicts the fact that
$\operatorname{Ran}L$ contains no smooth strictly
positive function.
Thus $\phi\ge0$.

If $\phi(x_0)=0$, then $w=c\phi$ attains its zero
minimum at $x_0$, and hence $A_rw(x_0)\ge0$.
By \eqref{adjointformula} and \eqref{qassumption},
$
 0=A_rw(x_0)+Q^*\phi(x_0)
 \ge q_-\av{\phi}=q_->0$, which is 
a contradiction. Therefore $\phi>0$.
Since the adjoint nullspace is one-dimensional,
the normalization $\av{\phi}=1$ determines $\phi$
uniquely.

\medskip
\noindent\textup{(iii)}
If $v$ is a smooth solution of $Lv=f$, duality gives
$
 \av{f\phi}
 =\av{\phi Lv}
 =\av{vL^*\phi}
 =0$.
Conversely, if $f\in C^\infty(\T)$ satisfies
$\av{f\phi}=0$, then \eqref{fredholmrange} gives
$v\in H^\sigma(\T)$ with $Lv=f$.
By \textup{(i)}, this solution is smooth. Since $\ker L=\operatorname{span}\{1\}$, the solution
is unique up to an additive constant.
Either $\av{v\phi}=0$ or $\min_\T v=0$ determines
that constant uniquely.
\end{proof}

\textbf{AI assistance statement. }The AI tool ChatGPT (OpenAI) was used in the preparation of this manuscript. The authors have reviewed all AI-assisted content and take full responsibility for the final manuscript.

\end{document}